
\documentclass[12pt,a4paper]{amsart}
\usepackage{amsfonts,amsmath}
\usepackage{enumerate}
\numberwithin{equation}{section}

     \addtolength{\textwidth}{3 truecm}
     \addtolength{\textheight}{1 truecm}
     \setlength{\voffset}{-.6 truecm}
     \setlength{\hoffset}{-1.3 truecm}

\theoremstyle{plain}
\newtheorem{Th}{Theorem}
\newtheorem{Lemma}{Lemma}
\newtheorem{Cor}[Th]{Corollary}
\newtheorem{Prop}[Th]{Proposition}

  \theoremstyle{definition}
 \newtheorem{Def}[Th]{Definition}
\newtheorem*{remark}{Remark}

\newcommand{\setZ}{\mathbb Z}

\newcommand{\veps}{\varepsilon}

\begin{document}

\title{Difference sets and frequently hypercyclic weighted shifts}

\author{Fr\'ed\'eric Bayart, Imre Z. Ruzsa}

\address{Clermont Universit\'e, Universit\'e Blaise Pascal, Laboratoire de Mathe\-ma\-ti\-ques, BP 10448, F-63000 CLERMONT-FERRAND -
CNRS, UMR 6620, Laboratoire de Math\'emati\-ques, F-63177 AUBIERE
}
\address{Alfr\'ed R\'enyi Institute of Mathematics\\
     Budapest, Pf. 127\\
     H-1364 Hungary
}
\email{Frederic.Bayart@math.univ-bpclermont.fr}
\email{ruzsa@renyi.hu}



\begin{abstract}
We solve several problems on frequently hypercyclic operators.
Firstly, we characterize frequently hypercyclic weighted shifts on $\ell^p(\mathbb Z)$, $p\geq 1$.
Our method uses properties of the difference set of a set with positive upper density.
Secondly, we show that there exists  an operator 
which is $\mathcal U$-frequently hypercyclic, yet not frequently hypercyclic
and that there exists an operator which is frequently hypercyclic, yet not distributionally chaotic.
These (surprizing) counterexamples are given by weighted shifts on $c_0$. 
The construction of these shifts lies on the construction of sets of positive integers whose difference sets
have very specific properties.
\end{abstract}
\maketitle

\section{Introduction}
Let $X$ be a Banach space and let $T\in\mathfrak L(X)$ be a bounded operator on $X$. $T$ is called \emph{hypercyclic}
provided there exists a vector $x\in X$ such that its orbit $O(x,T)=\{T^n x;\ n\geq 0\}$ is dense in $X$.
$x$ is then called a hypercyclic vector for $T$ and we shall denote by $HC(T)$ the set of $T$-hypercyclic vectors.
The study of hypercyclic operators is a branch of linear dynamics, a very active field of analysis.
We refer to the books \cite{BM09} and \cite{GePeBOOK} to learn more on this subject.

In 2005, the first author and S. Grivaux have introduced in \cite{BAYGRITAMS} a refinement of the notion of hypercyclicity, called
\emph{frequent hypercyclicity}. For an operator to be frequently hypercyclic, one ask now that not only
there exists a vector with a dense orbit, but moreover that this orbit visits often each nonempty open subset.
To be more precise, let us introduce the following definitions and notations. For $A\subset \mathbb Z_+$,
we denote by $A(n)=\{a\in A;\ a\leq n\}$. The lower density of $A$ is defined by 
$$\underline d(A)=\liminf_{n\to+\infty}\frac{\# A(n)}{n},$$
and the upper density of $A$ is defined by
$$\bar d(A)=\limsup_{n\to+\infty}\frac{\# A(n)}{n}.$$
We will also use corresponding definitions and notations for subsets of $\mathbb Z$. For instance, for $A\subset \mathbb Z$ and $n\in\mathbb Z_+$, 
$A(n)=\{a\in A;\ |a|\leq n\}$. 

\begin{Def}
$T\in\mathfrak L(X)$ is called \emph{frequently hypercyclic} provided there exists a vector $x\in X$, called a \emph{frequently hypercyclic vector for $T$},  such that,
 for any $U\subset X$ open and nonempty, $\{n\in\mathbb Z_+;\ T^n x\in U\}$ has positive lower density. We shall denote by $FHC(T)$ the set
of frequently hypercyclic vectors for $T$. 
\end{Def}
This notion has then been investigated by several authors, see for instance \cite{BAYGRIPLMS}, \cite{BAYMATHERGOBEST}, \cite{BoGre07}, \cite{Gri11}, \cite{GEPE05},  \cite{Shk09}. In particular, it has many connections with ergodic theory.
Of course, one may also investigate the corresponding notion replacing lower by upper density, leading to the following definition introduced in \cite{Shk09}.

\begin{Def}
$T\in\mathfrak L(X)$ is called $\mathcal U$-\emph{frequently hypercyclic} provided there exists a vector $x\in X$, called a \emph{$\mathcal U$-frequently hypercyclic vector for $T$}, such that,
 for any $U\subset X$ open and nonempty, $\{n\in\mathbb Z_+;\ T^n x\in U\}$ has positive upper density. The set of $\mathcal U$-frequently hypercyclic vectors for $T$
will be denoted by $\mathcal UFHC(T)$.
\end{Def}

Several basic problems remain open regarding these two refinements of hypercyclicity. In this paper, we solve several of these problems. 
We begin by studying the frequently (resp. the $\mathcal U$-frequently) hypercyclic weighted shifts on $\ell^p$, $p\geq 1$.
Let $\mathbf w=(w_n)_{n\in\mathbb Z}$ be a bounded sequence of positive real numbers
and let $p\geq 1$. The bilateral weighted shift $B_{\mathbf w}$ on $\ell^p(\mathbb Z)$ is defined
by $B_{\mathbf w}(e_n)=w_ne_{n-1}$, where $(e_n)$ is the standard basis of $\ell^p(\mathbb Z)$. 
Hypercyclicity of $B_{\mathbf w}$ has been characterized by H. Salas in \cite{Sal95}. 
In Section 3, we will prove the following characterization of frequently hypercyclic weighted shifts.
\begin{Th}\label{THM1}
Let $p\in[1,+\infty)$ and let $\mathbf w=(w_n)_{n\in\mathbb Z}$ be a bounded sequence of positive real numbers. 
The following assertions are equivalent.
\begin{enumerate}[(i)]
 \item $B_{\mathbf w}$ is frequently hypercyclic on $\ell^p(\mathbb Z)$;
 \item $B_{\mathbf w}$ is $\mathcal U$-frequently hypercyclic on $\ell^p(\mathbb Z)$;
 \item The series $\sum_{n\geq 1}\frac{1}{(w_1\cdots w_n)^p}$ and $\sum_{n<0}(w_{-1}\cdots w_{n})^p$ are convergent.
\end{enumerate}
\end{Th}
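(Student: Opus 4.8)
The plan is to prove the cycle of implications $(i)\Rightarrow(ii)\Rightarrow(iii)\Rightarrow(i)$. The first implication is trivial, since every set of positive lower density has positive upper density, so frequent hypercyclicity immediately yields $\mathcal U$-frequent hypercyclicity.

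The heart of the matter is the implication $(ii)\Rightarrow(iii)$, where I expect the main obstacle to lie. The idea is to argue by contraposition: assuming one of the two series diverges, I want to show that $B_{\mathbf w}$ cannot even be $\mathcal U$-frequently hypercyclic. Suppose for instance that $\sum_{n\geq 1}(w_1\cdots w_n)^{-p}=+\infty$. Let $x\in\ell^p(\mathbb Z)$ be any candidate $\mathcal U$-frequently hypercyclic vector. The key computation is to track the coordinate at position $0$: one has $(B_{\mathbf w}^n x)_0 = (w_1\cdots w_n)\, x_n$ for $n\geq 1$. For $B_{\mathbf w}^n x$ to lie near a fixed target like $e_0$ (say within distance $1/2$), the zeroth coordinate must be bounded below, which forces $|x_n|\gtrsim (w_1\cdots w_n)^{-1}$. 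If this were to happen for a set of $n$ of positive upper density, then summing $|x_n|^p \gtrsim (w_1\cdots w_n)^{-p}$ over that set, and using the divergence of the series together with positive upper density, I would derive $\sum_n |x_n|^p=+\infty$, contradicting $x\in\ell^p(\mathbb Z)$. The symmetric argument with the series $\sum_{n<0}(w_{-1}\cdots w_n)^p$ handles the negative tail, using the coordinates of $B_{\mathbf w}^n x$ at large positive indices. The delicate point is to make the ``bounded below on a set of positive upper density'' step precise: visiting a neighborhood of $e_0$ with positive upper density is what $\mathcal U$-frequent hypercyclicity guarantees, and this is exactly where the density hypothesis is consumed.

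For $(iii)\Rightarrow(i)$, the plan is to construct a frequently hypercyclic vector explicitly, and this is where the announced connection with difference sets enters. The convergence of the two series means that the formal inverse images $S^n e_0$ and the forward images $B_{\mathbf w}^n e_0$ are summable, so one can build frequently hypercyclic vectors by a series construction of the form $x=\sum_j T^{-n_j} u_j$ (suitably interpreted), where the $u_j$ range over a countable dense set and the blocks are placed along disjoint arithmetic-like sets $A_j\subset\mathbb Z_+$ of positive lower density. The construction must ensure two things simultaneously: that the different blocks do not interfere destructively, and that each $A_j$ is spread out enough for the orbit to visit the corresponding neighborhood with positive lower density. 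This is precisely the role of a Frequent Hypercyclicity Criterion adapted to the shift setting, and the difference-set properties are what guarantee one can choose the index sets $A_j$ so that the translated blocks overlap in a controlled way. I expect the convergence of the two series to feed directly into the $\ell^p$-summability estimates that make the candidate vector well defined.

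The overall strategy is thus standard in shape, and the genuinely hard step is $(ii)\Rightarrow(iii)$: the challenge is to extract a quantitative lower bound on the coordinates of $x$ from a merely qualitative density hypothesis, and to combine it with divergence of the series to contradict $x\in\ell^p(\mathbb Z)$. The novelty, and the reason the paper invokes difference sets, is that the converse construction in $(iii)\Rightarrow(i)$ requires arranging the visiting times into dense sets whose pairwise differences avoid bad collisions, which is a purely combinatorial problem about sets of positive density and their difference sets.
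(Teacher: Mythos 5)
The implication $(ii)\Rightarrow(iii)$ as you propose it has a genuine gap, and it sits exactly at the step you yourself flag as ``delicate''. From visits of $B_{\mathbf w}^n x$ near $e_0$ you correctly get $|x_n|\geq \frac12 (w_1\cdots w_n)^{-1}$ for all $n$ in a set $A$ of positive upper density. But divergence of $\sum_{n\geq1}(w_1\cdots w_n)^{-p}$ together with $\bar d(A)>0$ does \emph{not} yield $\sum_{n\in A}(w_1\cdots w_n)^{-p}=+\infty$: since $\mathbf w$ is only assumed bounded above (no lower bound), the entire divergence of the series can be carried by a set of indices of density zero --- e.g.\ $w_1\cdots w_n$ of order $1$ for $n$ near a sparse set such as $\{2^k\}$ and huge in between; boundedness of $\mathbf w$ only forces logarithmically long transition zones, which still have density zero --- and nothing prevents the return set $A$ from avoiding that null set entirely. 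This is not a technicality: the paper itself recalls that on $c_0(\mathbb Z_+)$ there is a frequently hypercyclic shift with $w_1\cdots w_n=1$ infinitely often, so return times really can dodge the indices where $(w_1\cdots w_n)^{-1}$ is large. The paper closes the gap by exploiting \emph{pairs} of return times rather than only the $0$-th coordinate: for $n,m\in A$ with $m>n$, the $(m-n)$-th coordinate of $B_{\mathbf w}^n x$ equals $w_1\cdots w_m x_m/(w_1\cdots w_{m-n})$, and combining $|w_1\cdots w_m x_m|\geq 1/2$ with $\|B_{\mathbf w}^n x-e_0\|_p\leq 1/2$ gives $\sum_{m\in A,\ m>n}(w_1\cdots w_{m-n})^{-p}\leq 1$ for \emph{every} $n\in A$, i.e.\ boundedness on $A$ of $\beta_n=\sum_{m\in A}\alpha_{m-n}$ with $\alpha_k=(w_1\cdots w_k)^{-p}$. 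The contradiction then requires the genuinely new combinatorial input: Theorem \ref{THMSYNDETIC} (for $\bar d(A)=\delta>0$, the set of $k$ with $\bar d(A\cap(A-k))>(1-\veps)\delta^2$ is syndetic), which through Corollary \ref{CORSERIES} and the regularity $\alpha_k\geq C\alpha_{k-1}$ (this is where boundedness of $\mathbf w$ is consumed) forces $(\beta_n)_{n\in A}$ to be unbounded whenever $\sum_k\alpha_k=+\infty$. The negative-tail series is treated symmetrically via the coordinates at the negative indices $m-n$ with $m\in A$, $m<n$, not via ``coordinates at large positive indices'' as you suggest.

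You also misattribute the role of the difference-set machinery: in the paper it powers precisely $(ii)\Rightarrow(iii)$, whereas $(iii)\Rightarrow(i)$ is the soft direction, disposed of by a direct application of the Bonilla--Grosse-Erdmann Frequent Hypercyclicity Criterion with $S(e_n)=w_{n+1}^{-1}e_{n+1}$ on the dense set of finitely supported vectors; convergence of the two series in $(iii)$ gives exactly the unconditional convergence of $\sum_n B_{\mathbf w}^n x$ and $\sum_n S^n x$ demanded by the criterion. Your sketch of $(iii)\Rightarrow(i)$ --- blocks placed along separated sets of positive lower density --- essentially re-proves that criterion, which is harmless but is neither where the difficulty nor where the paper's novelty lies; as it stands, the proposal's hard direction is the one that does not work.
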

A similar result holds for the unilateral weighted shift $B_{\mathbf w}$ on $\ell^p(\mathbb Z_+)$,
which is defined by $B_{\mathbf w}(e_n)=w_ne_{n-1}$ for $n\geq 1$ and by $B_{\mathbf w}(e_0)=0$. 
\begin{Th}\label{THM2}
Let $p\in[1,+\infty)$ and let $\mathbf w=(w_n)_{n\in\mathbb Z_+}$ be a bounded sequence of positive real numbers. 
The following assertions are equivalent.
\begin{enumerate}[(i)]
 \item $B_{\mathbf w}$ is frequently hypercyclic on $\ell^p(\mathbb Z_+)$;
 \item $B_{\mathbf w}$ is $\mathcal U$-frequently hypercyclic on $\ell^p(\mathbb Z_+)$;
 \item The series $\sum_{n\geq 1}\frac{1}{(w_1\cdots w_n)^p}$ is convergent.
\end{enumerate}
\end{Th}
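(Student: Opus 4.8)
The plan is to prove the three implications (i)$\Rightarrow$(ii)$\Rightarrow$(iii)$\Rightarrow$(i). The implication (i)$\Rightarrow$(ii) is immediate: a set of positive lower density has positive upper density, so every frequently hypercyclic vector is a fortiori $\mathcal U$-frequently hypercyclic. The remaining two implications are of very different natures: (iii)$\Rightarrow$(i) is soft and can be deduced from the bilateral Theorem~\ref{THM1}, whereas (ii)$\Rightarrow$(iii) carries all the difficulty and is where the difference-set machinery must enter.

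For (iii)$\Rightarrow$(i) I would not argue directly but reduce to Theorem~\ref{THM1}. Extend the weights to $\mathbb Z$ by setting $\tilde w_n=w_n$ for $n\ge 1$ and $\tilde w_n=1/2$ for $n\le 0$, and consider the bilateral shift $B_{\tilde{\mathbf w}}$ on $\ell^p(\mathbb Z)$. The backward series $\sum_{n<0}(\tilde w_{-1}\cdots\tilde w_n)^p=\sum_{k\ge 1}2^{-pk}$ converges automatically and the forward series is unchanged, so condition (iii) of Theorem~\ref{THM1} holds for $\tilde{\mathbf w}$ exactly when it holds for $\mathbf w$; hence $B_{\tilde{\mathbf w}}$ is frequently hypercyclic on $\ell^p(\mathbb Z)$. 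The coordinate projection $q\colon\ell^p(\mathbb Z)\to\ell^p(\mathbb Z_+)$ onto the nonnegative coordinates is a continuous surjection with $qB_{\tilde{\mathbf w}}=B_{\mathbf w}q$ (on the basis: for $n\ge 1$, $qB_{\tilde{\mathbf w}}e_n=w_ne_{n-1}=B_{\mathbf w}qe_n$, and for $n\le 0$ both sides vanish). Since frequent hypercyclicity is transported by such an intertwining surjection—if $y\in FHC(B_{\tilde{\mathbf w}})$ then for every nonempty open $U\subseteq\ell^p(\mathbb Z_+)$ the set $\{n:B_{\mathbf w}^n(qy)\in U\}=\{n:B_{\tilde{\mathbf w}}^n y\in q^{-1}(U)\}$ has positive lower density—the vector $qy$ is frequently hypercyclic for $B_{\mathbf w}$. (Alternatively one verifies the Frequent Hypercyclicity Criterion directly with the right inverse $Se_n=w_{n+1}^{-1}e_{n+1}$, for which unconditional convergence of $\sum_n S^ne_k$ reduces exactly to convergence of the tails of $\sum_n(w_1\cdots w_n)^{-p}$.)

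The core is (ii)$\Rightarrow$(iii). Assume $x\in\mathcal UFHC(B_{\mathbf w})$, fix $\epsilon\in(0,1)$, and apply the definition to $U=\{y:\|y-e_0\|<\epsilon\}$ to get a return set $A=\{n:\|B_{\mathbf w}^nx-e_0\|<\epsilon\}$ with $\bar d(A)>0$. Writing $b_j=(B_{\mathbf w}^jx)_0=(w_1\cdots w_j)x_j$, the key is the elementary identity $(B_{\mathbf w}^mx)_d=(w_1\cdots w_d)^{-1}(B_{\mathbf w}^{m+d}x)_0$, both sides being equal to $(w_{d+1}\cdots w_{d+m})x_{d+m}$. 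For $m\in A$ the tail $\sum_{d\ge 1}|(B_{\mathbf w}^mx)_d|^p$ is below $\epsilon^p$, which rewrites as
\[
\sum_{d\ge 1}\frac{|b_{m+d}|^p}{(w_1\cdots w_d)^p}<\epsilon^p\qquad(m\in A).
\]
Restricting the inner sum to those $d$ with $m+d\in A$, where $|b_{m+d}|\ge 1-\epsilon$, yields $\sum_{d\in(A-m),\,d\ge 1}(w_1\cdots w_d)^{-p}<\epsilon^p/(1-\epsilon)^p$ for every $m\in A$. Thus the partial sums of $\sum_d(w_1\cdots w_d)^{-p}$ over each translate $A-m$ of the difference set are uniformly bounded.

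The main obstacle is the passage from this control on translates of $A-A$ to the full series $\sum_{d\ge 1}(w_1\cdots w_d)^{-p}<\infty$. The naive hope—covering $\mathbb Z_+$ cofinitely by finitely many translates $A-m_i$ with $m_i\in A$—fails in general, for instance if $A$ is trapped in a single residue class so that $A-A$ misses an entire coset; hence summability off the difference set cannot be extracted from $A$ alone. This is precisely where the structural properties of difference sets of sets of positive upper density are indispensable. I would either feed into the same scheme the estimates coming from a fully supported target $\mathbf u$ (for which $(B_{\mathbf w}^mx)_d\approx u_d$ forces $|b_{m+d}|^p(w_1\cdots w_d)^{-p}\approx u_d^p$ for \emph{all} $d$, thereby reaching every residue), or invoke a difference-set lemma asserting that positive upper density supplies enough additive richness to recover the whole sum. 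Converting this additive-combinatorial input into the clean conclusion $\sum_d(w_1\cdots w_d)^{-p}<\infty$ is the crux of the proof.
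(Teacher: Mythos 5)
Your implications (i)$\Rightarrow$(ii) and (iii)$\Rightarrow$(i) are sound. In particular, the reduction of (iii)$\Rightarrow$(i) to Theorem~\ref{THM1} via the intertwining projection $q$ is a correct alternative to the paper's route (the paper simply notes this implication is known since \cite{BAYGRITAMS} and, for the bilateral case, applies the frequent hypercyclicity criterion of \cite{BoGre07}; your parenthetical variant with $Se_n=w_{n+1}^{-1}e_{n+1}$ is exactly that argument). Your first step of (ii)$\Rightarrow$(iii) also matches the paper verbatim: from $m\in A=\{n:\ \|B_{\mathbf w}^nx-e_0\|_p<\epsilon\}$ one gets the uniform bound $\sum_{d\geq 1,\ d\in A-m}(w_1\cdots w_d)^{-p}<\epsilon^p/(1-\epsilon)^p$ for every $m\in A$.

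The genuine gap is that you stop precisely where the theorem's content lies, and neither substitute you sketch works. The fully-supported-target idea fails because the coordinates of any target $\mathbf u\in\ell^p$ tend to $0$: the inequality $\|B_{\mathbf w}^m x-\mathbf u\|_p<\epsilon$ gives lower bounds $|(B_{\mathbf w}^mx)_d|$ comparable to $u_d$ only for the finitely many $d$ with $u_d$ above the error level, so you never ``reach every residue''; and, as you yourself observe, covering $\mathbb Z_+$ by finitely many translates of $A-A$ is impossible in general. What the paper actually proves (Theorem~\ref{THMSYNDETIC}) is a Khintchine-type recurrence statement: with $\delta=\bar d(A)$, the set $F=\{k:\ \bar d(A\cap(A-k))>\delta^2/2\}$ is \emph{syndetic}, obtained by a second-moment (Cauchy--Schwarz) argument on $f(x)=\#\{k\in R:\ x\in A-k\}$ for sets $R$ avoiding $F$ in differences, which bounds $\#R$. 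But syndeticity of $F$ alone does not force $\sum_{k\in F}\alpha_k=\infty$ for a divergent series: a divergent series can concentrate its mass off any prescribed syndetic set. The missing input, which your proposal never identifies, is the regularity $\alpha_n\geq C\alpha_{n-1}$ with $\alpha_n=(w_1\cdots w_n)^{-p}$, valid exactly because $\mathbf w$ is \emph{bounded} (if $w_n\leq W$ then $(w_1\cdots w_n)^{-p}\geq W^{-p}(w_1\cdots w_{n-1})^{-p}$); this lets each gap of $F$ be absorbed into the nearest element of $F$, so $\sum_{k\in F}\alpha_k=\infty$ (Corollary~\ref{CORSERIES}). Averaging then yields $\limsup_i\frac{1}{2m_i+1}\sum_{n\in A,\,|n|\leq m_i}\beta_n\geq \frac{\delta^2}{2}\sum_{k\in F,\,|k|<l}\alpha_k\to\infty$, so $(\beta_n)_{n\in A}$ is unbounded, contradicting your uniform bound. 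Without this lemma (or an equivalent), the implication (ii)$\Rightarrow$(iii), which carries all the difficulty of the theorem, remains unproved.
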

That $\sum_{n\geq 1}\frac1{(w_1\cdots w_n)^p}<+\infty$ implies the frequent hypercyclicity of $B_{\mathbf w}$ on $\ell^p(\mathbb Z_+)$
is known since \cite{BAYGRITAMS}.
 A necessary condition for $B_{\mathbf w}$ to 
be frequently hypercyclic was given in \cite{BAYGRITAMS} and this condition was improved in \cite{GEPE05}. 
This last condition is the starting point of the present work. We show how to combine this condition with a result on the difference set of a set with positive upper
density to prove that (ii) implies (iii) in Theorem \ref{THM2}.

\smallskip

We then investigate frequently hypercyclic weighted shifts on $c_0$. It is worth noting that the previous theorems cannot be extended to $c_0$. 
Indeed,
in \cite{BAYGRIPLMS}, a frequently hypercyclic backward weighted shift $B_{\mathbf w}$ is exhibited on $c_0(\mathbb Z_+)$
such that the sequence $(w_1\cdots w_n)^{-1}$ \emph{does not} converge to 0 (in fact, one may require
that $w_1\cdots w_n=1$ for infinitely many $n$). Nevertheless, we will give in Section \ref{SECCARACC0} a characterization of frequently
and $\mathcal U$-frequently hypercyclic weighted shifts on $c_0$. This characterization is necessarily more difficult than that of Theorem \ref{THM2}. However, 
it will be efficient to give later in the paper nontrivial examples and counterexamples of frequently hypercyclic weighted shifts on $c_0(\mathbb Z)$. 

\medskip

Interestingly, the weighted shift $B_{\mathbf w}$ constructed in \cite{BAYGRIPLMS} give several counterexamples in the
theory of frequently hypercyclic operators:
\begin{itemize}
\item $B_{\mathbf w}$ is frequently hypercyclic, yet neither chaotic nor topologically mixing;
\item $B_{\mathbf w}$ does not admit any nonzero invariant Gaussian measure.
\end{itemize}
In Section \ref{SECFHCC0} and \ref{SECFHCDC}, we show that weighted shifts on $c_0$ can help to solve further problems on frequently hypercyclic operators. 
For instance, there are no examples in the literature of $\mathcal U$-frequently hypercyclic
operators which are not frequently hypercyclic. Weighted shifts on $c_0(\mathbb Z_+)$ give an example.

\begin{Th}\label{THMFHCC0}
There exists a bounded sequence $\mathbf w=(w_n)_{n\geq 1}$ of positive real numbers such that 
\begin{itemize}
\item[(i)] $B_{\mathbf w}$ is $\mathcal U$-frequently hypercyclic on $c_0(\mathbb Z_+)$;
\item[(ii)] $B_{\mathbf w}$ is not frequently hypercyclic on $c_0(\mathbb Z_+)$.
\end{itemize}
\end{Th}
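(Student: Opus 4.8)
The plan is to deduce Theorem~\ref{THMFHCC0} from the characterization of $\mathcal U$-frequently and of frequently hypercyclic weighted shifts on $c_0$ obtained in Section~\ref{SECCARACC0}, reducing the whole statement to the construction of a single set of integers whose difference set has carefully opposed density properties. Throughout write $v_n=w_1\cdots w_n$. The content of the characterization is that the existence of the relevant hypercyclic vector is governed by the sizes of $v$ along the time set at which a given target is approximated, together with the sizes of $v$ along the \emph{differences} of that time set: if a block encoding a target is placed around a time $n$ and read off at an earlier time $n'$, the resulting error is felt at coordinate $n-n'$ with magnitude controlled by $1/v_{n-n'}$. Thus realizing a target with prescribed precision along a time set $E$ forces $v$ to be large both on $E$ and on $(E-E)\cap\setN$, and the gap between the two notions becomes the gap between asking for such an $E$ of positive lower density (frequent hypercyclicity) and only of positive upper density ($\mathcal U$-frequent hypercyclicity).

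First I would extract from Section~\ref{SECCARACC0} the precise combinatorial conditions on $(v_n)$ equivalent to (i) and to the negation of (ii), so that the analytic part disappears and only a statement about sets of integers remains. I would then design the weights by prescribing $(v_n)$ directly: choose a set $D\subseteq\setN$ of admissible differences, declare $v_n$ large exactly when $n$ lies in a suitable thickening of $D$ and bounded away from infinity off it, and recover $\mathbf w$ by $w_n=v_n/v_{n-1}$, checking that $\mathbf w$ stays bounded and positive. The set $D$ must be built so that there is a set $E$ with $\bar d(E)>0$ and $(E-E)\cap\setN\subseteq D$, while \emph{no} set $E$ with $\underline d(E)>0$ satisfies $(E-E)\cap\setN\subseteq D$; equivalently, $D$ is the difference set of a set of positive upper density but is not the difference set of any set of positive lower density, uniformly across all the precision scales that the two hypercyclicity notions involve.

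With such a $D$ in hand, for the positive assertion (i) I would feed the characterization an explicit family of well-separated blocks: along the positive-upper-density set $E$, the target at precision level $\ell$ can be realized because every difference producing cross-talk lies in $D$, where $v$ is large, so the errors sum to something small in the $c_0$ norm. For (ii) I would argue by contradiction: a frequently hypercyclic vector would produce, at each precision level, a good time set of positive lower density, whose difference set is syndetic — this is where the results on difference sets of sets of positive density enter — and matching this against the region where $v$ was arranged to be large would exhibit $D$ as the difference set of a set of positive lower density, contradicting the defining property of $D$.

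The main obstacle is exactly this number-theoretic construction of $D$, together with the negative direction it must support. Producing a set that is a difference set at the level of positive upper density but not at the level of positive lower density, uniformly over the infinitely many thresholds at play, is the heart of the matter: one must engineer the difference set by hand, using long blocks that furnish the upper density but are separated by forbidden difference ranges growing fast enough to defeat any positive lower bound on the density. Verifying that this one set simultaneously makes the $\mathcal U$-frequent construction converge in $c_0$ and makes every frequent scheme collide with the difference-set theorem is the delicate point; by contrast, passing between $(v_n)$ and $\mathbf w$ and checking boundedness of the weights is routine.
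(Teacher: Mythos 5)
Your overall frame --- reduce everything to the characterization of Theorem~\ref{THMCARACUNILATERAL}, then work purely with sets of integers, prescribing the products $v_n=w_1\cdots w_n$ and recovering a bounded weight --- is indeed the paper's frame, and your positive direction (realize targets along sets of positive upper density whose relevant differences land where $v$ is large) is a fair sketch of the paper's. The gap is in the negative direction, which you route entirely through an unconstructed set $D$ with the two properties that some $E$ with $\bar d(E)>0$ has $(E-E)\cap\mathbb N\subseteq D$ while no $E'$ with $\underline d(E')>0$ does; you flag this yourself as ``the heart of the matter'' and give no construction. This is not a deferral of routine details: it is doubtful that such a binary $D$ can serve at all. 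First, the characterization is quantitative: condition (d) demands products at least $M(p)M(q)$ on cross differences with $M(p)\to\infty$, and condition (c) demands $v_n\to\infty$ along $E_p+[0,p]$, so the weight must be graded rather than ``large on $D$, bounded off $D$'', and the relevant objects are the super-level sets $\{n:\ v_n>2^p\}$. Second, and decisively, in the paper's own construction these level sets \emph{do} contain difference sets of sets of positive lower density: $\{n:\ v_n\geq 2^p\}$ contains the full progressions $b_q\mathbb N+[0,q]$ for every $q>p$ (where the auxiliary weight $\mathbf w^q$ forces $w_1\cdots w_n=2^q$), and $(b_q\mathbb N-b_q\mathbb N)\cap\mathbb N=b_q\mathbb N$. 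So a contradiction run ``through the difference set'', as you propose, would simply fail against the weight that actually does the job; and since by Theorem~\ref{THMSYNDETIC} any admissible $D$ must contain the positive part of a syndetic difference set, the existence of your $D$ is at best a hard, unproven number-theoretic claim on which the whole proof would rest.

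What the paper does instead is to make the obstruction to frequent hypercyclicity condition (c) alone: an FHC vector yields $E$ with $\underline d(E)>0$ along which $w_1\cdots w_n\to\infty$, and the difference-set machinery (Theorem~\ref{THMSYNDETIC}, Corollary~\ref{CORSERIES}) is used for Theorems~\ref{THM1} and \ref{THM2}, not here. Concretely, within-$E_p$ differences are tamed by placing $E_p\subseteq b_p\mathbb N$ and giving $b_p\mathbb N+[0,p]$ product exactly $2^p$ --- large enough for (d) at level $p$ but \emph{bounded}, hence useless for (c) --- while the sets $E_p$ themselves and the cross differences $E_p-E_q$ are confined to sparse regions avoiding the intervals $[a_r,b_r]$ (with $b_r\geq ra_r$), where a second weight $\mathbf w^0$ makes products genuinely blow up. The non-FHC proof is then a counting estimate: inside $[0,b_r]$ the set $\{n:\ v_n>2^p\}$ lies in $[0,a_r]\cup\bigcup_{q>p}\bigl(b_q\mathbb N+[-q,q]\bigr)$, whence $\underline d(E)\leq\sum_{q>p}q^{-2}$ for every $p$, so $\underline d(E)=0$. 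To repair your proposal you would have to replace the difference-set dichotomy by this graded level-set sparsity along the time set itself --- i.e.\ arrange that the sets $\{n:\ v_n>2^p\}$ have vanishing density along a checkpoint sequence while still supporting a whole family $(E_p)$ of positive upper density satisfying (a)--(d) --- which is precisely the two-weight mechanism your sketch does not engage with.
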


Another problem that weighted shifts on $c_0$ can solve is related to distributionally chaotic operators. 
The notion of distributional chaos was introduced by B. Schweizer and J. Sm\'ital in \cite{SS94}. Let $f:X\to X$ be a continuous map
on a metric space $X$. For each $x,y\in X$ and each $n\in\mathbb N$, the \emph{distributional function} $F_{xy}^n:\mathbb R_+\to[0,1]$ is defined by
$$F_{xy}^n(\tau):=\frac 1n\textrm{card}\big\{0\leq i\leq n-1;\ d(f^i(x),f^i(y))<\tau\big\}.$$
Moreover define
$$F_{xy}(\tau):=\liminf_{n\to+\infty}F_{xy}^n(\tau)\textrm{ and }F_{xy}^*(\tau):=\limsup_{n\to+\infty}F_{xy}(\tau).$$
$f$ is said \emph{distributionally chaotic} if there exists an uncountable set $\Gamma\subset X$ and $\veps>0$
such that, for every $\tau>0$ and each pair of distinct points $x,y\in\Gamma$, we have
$$F_{xy}(\veps)=0\textrm{ and }F_{xy}^*(\tau)=1.$$
When $f$ is a linear map acting on a Banach space $X$, the notion of distributional chaos is related to the existence of distributionally
irregular vectors.
\begin{Def}
 Given $T\in\mathfrak L(X)$ and $ \veps>0$, a vector $x\in X$ is a \emph{distributional irregular vector} for $T$ if there exists
$A,B\subset \mathbb N$ with $\bar d(A)=\bar d(B)=1$ such that
$$\lim_{n\to+\infty,\ n\in A}T^n x=0\textrm{ and }\lim_{n\to+\infty,\ n\in B}\|T^n x\|=+\infty.$$
\end{Def}
It is shown in \cite{BBMP13} that $T$ is distributionally chaotic if and only if $T$ admits a distributional irregular vector. 

In \cite{BBMP13}, the following question is asked: are there frequently hypercyclic operators which are not distributionally chaotic?
We answer this question thanks to weighted shifts on $c_0(\mathbb Z)$. 
\begin{Th}\label{THMFHCDC}
 There exists a frequently hypercyclic weighted shift on $c_0(\mathbb Z)$ which is not distributionally chaotic.
\end{Th}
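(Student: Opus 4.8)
The plan is to combine the characterization of frequently hypercyclic weighted shifts on $c_0(\mathbb Z)$ obtained in Section \ref{SECCARACC0} with the description of distributional chaos via distributional irregular vectors recalled from \cite{BBMP13}: since $T$ is distributionally chaotic if and only if it admits a distributional irregular vector, it suffices to produce weights $\mathbf w$ that pass the frequent-hypercyclicity test of Section \ref{SECCARACC0} while ruling out the existence of \emph{any} distributional irregular vector for $B_{\mathbf w}$.

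First I would write down explicitly the iterates. For $x=(x_k)_{k\in\mathbb Z}\in c_0(\mathbb Z)$ one has $(B_{\mathbf w}^n x)_k = x_{k+n}\,w_{k+1}\cdots w_{k+n}$, so that
$$\|B_{\mathbf w}^n x\|_\infty = \sup_{k\in\mathbb Z} |x_{k+n}|\, w_{k+1}\cdots w_{k+n} \le \|x\|_\infty\, Q(n), \qquad Q(n):=\sup_{k\in\mathbb Z} \prod_{j=1}^n w_{k+j}.$$
A distributional irregular vector $x$ requires in particular a set $B$ with $\bar d(B)=1$ along which $\|B_{\mathbf w}^n x\|\to+\infty$. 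Hence, if I can guarantee that the set $G=\{n : Q(n)\le C\}$ has positive lower density for some constant $C$, then for every $x$ the orbit stays bounded by $C\|x\|_\infty$ along $G$, so $B$ must be contained, up to finitely many terms, in the complement of $G$; as $\bar d(\mathbb Z_+\setminus G)\le 1-\underline d(G)<1$, no such $B$ can have upper density $1$. This rules out distributional chaos for all vectors at once, so the whole task reduces to building weights for which $Q$ is bounded along a set of positive lower density, yet which remain frequently hypercyclic.

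The tension to resolve is that frequent hypercyclicity forces $Q(n)$ to be large along a positive-density set of times: to approximate a target $u$ of large norm on a set of times of positive lower density, the orbit norm, hence $Q(n)$, must be large there. The two requirements are compatible only because positive-density sets may be disjoint. I would therefore take the weights equal to $1$ except on a sparse family of blocks, placed so that the cumulative product $v_m=w_1\cdots w_m$ has an inexact periodic-like structure at a sequence of growing scales: the block locations form a set $E$ whose difference set $E-E$ is engineered so that shifting by any $n$ in a prescribed positive-density set $G$ carries the peaks of $v$ onto peaks, forcing $v_{m}/v_{m-n}$ to stay bounded uniformly in $m$ (that is $Q(n)\le C$), while for $n$ in a disjoint positive-density set the same products become large enough to feed the criterion of Section \ref{SECCARACC0}. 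Avoiding exact periodicity, which would destroy hypercyclicity, is exactly what makes the multi-scale construction necessary.

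The main obstacle is this simultaneous control of two densities through a single combinatorial object: one must produce a set of positive integers whose difference set exhibits the above dichotomy — rich enough, on a positive-density set of gaps, to realize the large products needed for frequent hypercyclicity, but with enough additive alignment, on another positive-density set of gaps, to keep $Q$ bounded. This is precisely the delicate construction of sets with prescribed difference-set behaviour announced in the introduction, and checking that the resulting $\mathbf w$ meets the frequent-hypercyclicity characterization while the bound $Q(n)\le C$ holds along $G$ is the heart of the argument; the distributional-chaos conclusion is then immediate from the reduction above.
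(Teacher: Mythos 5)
Your reduction is logically valid as far as it goes: if $Q(n)=\sup_k w_{k+1}\cdots w_{k+n}$ were bounded on a set $G$ with $\underline d(G)>0$, then indeed no vector could have $\|B_{\mathbf w}^n x\|\to+\infty$ along a set of upper density $1$, so no distributionally irregular vector could exist. The fatal problem is that this hypothesis is \emph{incompatible with frequent hypercyclicity itself}, so the weights you set out to build cannot exist. This is exactly the content of Proposition \ref{PROPFHCDC} of the paper: every frequently hypercyclic operator on a Banach space admits a residual set of vectors $y$ with a distributionally unbounded orbit, i.e.\ a set $B$ with $\bar d(B)=1$ and $\lim_{n\to+\infty,\,n\in B}\|T^n y\|=+\infty$ (the proof only uses a frequently hypercyclic vector $x$, iterates $y_k=T^{p_k}x$ close to $0$, and the criterion of \cite{BBMP13}). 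For such a $y$, your bound $\|B_{\mathbf w}^n y\|\leq C\|y\|_\infty$ for $n\in G$ forces $B\cap G$ to be finite, whence $\bar d(B)\leq 1-\underline d(G)<1$, a contradiction. Your third paragraph senses the tension but resolves it incorrectly: it is true that two positive-density sets can be disjoint, which is why the frequently hypercyclic vector alone gives no contradiction; but frequent hypercyclicity forces blow-up along sets of upper density \emph{one} (for residually many vectors), and a set of upper density $1$ meets every set of positive lower density in an infinite set. So the multi-scale construction you propose, however the difference sets are arranged, can never terminate successfully.

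The viable strategy — and the one the paper follows — is to attack the \emph{other} half of distributional irregularity, the requirement of a set $A$ with $\bar d(A)=1$ along which $T^n x\to 0$. One constructs $\mathbf w$ (with $w_k=2$ for $k\geq 1$, so $Q(n)\geq 2^n$ is as unbounded as can be) such that the left partial products satisfy $w_{-n+1}\cdots w_0=1$ for all $n$ in a set $A$ of positive lower density, obtained by making the exceptional sets (arithmetic-progression neighbourhoods $b_p\mathbb N+[-2p,2p]$ and geometric-scale intervals $I_u^{a,4\varepsilon}$, whose union has upper density $<1$) carry all the smallness needed for condition (d) of Theorem \ref{THMCARACBILATERAL}. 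Then for any $x\neq 0$ with $x_k\neq 0$, one has $\|B_{\mathbf w}^{n-k}x\|_\infty\geq |x_k|>0$ whenever $n-k\in A$, so the orbit is bounded \emph{below} on a set of positive lower density and cannot tend to $0$ along a set of upper density $1$; this rules out distributionally irregular vectors without constraining the unbounded half at all. Frequent hypercyclicity is then checked through Theorem \ref{THMCARACBILATERAL} with sets $E_p=\bigcup_{u\in A_p}I_u^{a,\varepsilon}\cap b_p\mathbb N$ whose difference sets are confined to the exceptional sets above. You should rebuild your argument along these lines; the bounded-$Q$ route cannot be repaired.
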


We conclude this paper in Section \ref{SECOPEN} by miscellaneous problems and results on frequently hypercyclic operators.
In particular, we show that if $T$ is invertible and frequently hypercyclic, then $T^{-1}$ is $\mathcal U$-frequently
hypercyclic.

\medskip

Let us mention that a common feature of Theorems \ref{THM1} to \ref{THMFHCDC} is their interaction with additive number theory. 
To prove Theorems \ref{THM1} and \ref{THM2}, we need a property of the difference set of a set with positive upper
density. The proofs
of Theorems \ref{THMFHCC0} and \ref{THMFHCDC} need both the construction of big sets of integers
such that their difference sets are sparse enough.

\section{A result on difference set}
Let $A\subset\mathbb Z_+$ be a set with positive upper density. 
A well-known result of Erd\"os and Sark\"ozy (see for instance \cite{Rus73}) ensures that the difference set $A-A$ is syndetic
(namely it has bounded gaps). Our first result is a strenghtening of this property. Let us introduce, for $k\in\mathbb Z_+$,
$$B_k=A\cap(A-k).$$
Erd\"os and Sark\"ozy proved that the set of integers $k$ such that $B_k$ is nonempty is syndetic. We shall prove that the set
of integers $k$ such that $B_k$ has a big upper density is also syndetic. For convenience, we formulate this for subsets of $\mathbb Z$.
\begin{Th}\label{THMSYNDETIC}
Let $A\subset \mathbb Z$ be a set with positive upper density, let $\delta=\bar d(A)$ and
let $\varepsilon\in(0,1)$. 
For any $k\in\mathbb Z$, let $B_k=A\cap(A-k)$ and let $\delta_k=\bar d(B_k)$. Let also $F=\{k;\ \delta_k>(1-\varepsilon)\delta^2\}$.
Then $F$ is syndetic.
\end{Th}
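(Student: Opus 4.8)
The plan is to read the statement as a quantitative recurrence property of the autocorrelation of $\mathbf 1_A$ and to exploit its positive-definiteness. For $k\in\mathbb Z$ and $n\in\mathbb Z_+$ set $A_n=A\cap[-n,n]$ and $r_k(n)=\#\{m:\ m\in A_n,\ m+k\in A_n\}$. Since $r_k(n)$ and $\#(B_k\cap[-n,n])$ differ by at most $|k|$ boundary terms, for each fixed $k$ one has $\delta_k=\limsup_n r_k(n)/(2n)$. The goal is then to produce $N_0=N_0(\delta,\varepsilon)$ such that every window $[t,t+N_0)$ contains some $k$ with $\delta_k>(1-\varepsilon)\delta^2$, which is precisely syndeticity of $F$. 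The engine will be to bound $\delta_k$ below by an almost-periodic, positive-definite correlation sequence whose mean is at least $\delta^2$, and then to use that such a sequence exceeds $(1-\varepsilon)\delta^2$ on a syndetic set.

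First I would fix $\eta\in(0,\varepsilon)$ and, using $\bar d(A)=\delta$, select a sequence $n\to\infty$ with $\#A_n\ge(\delta-\eta)(2n)$ (the \emph{dense scales}). The core computation is the finitary positive-definiteness of $r$: for any finite set $K=\{k_1,\dots,k_L\}$ of diameter $\Delta$, expanding a square and applying Cauchy--Schwarz on the support gives
\[
\sum_{i,j=1}^{L}r_{k_i-k_j}(n)=\Big\|\sum_{i=1}^{L}\mathbf 1_{A_n}(\cdot-k_i)\Big\|_{2}^{2}\ \ge\ \frac{\big(L\,\#A_n\big)^2}{2n+\Delta}.
\]
Dividing by $2n$, letting $n\to\infty$ along dense scales (so $\Delta/n\to0$) and then $L\to\infty$, the diagonal $k=0$ contribution washes out and one finds that the differences $k_i-k_j$ cannot all satisfy $\delta_{k_i-k_j}\le(1-\varepsilon)\delta^2$, for otherwise $(1-\varepsilon)\delta^2\ge\delta^2$. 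This exhibits $\delta^2$ as the sharp mean level of the correlation (a random set of density $\delta$ shows the constant cannot be improved), and, in averaged form, its attainment on difference sets of large, not-too-spread configurations.

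The estimate above only locates good $k$ among difference sets $K-K$, hence near the origin; it does \emph{not} by itself place a good $k$ inside a prescribed, possibly far, window $[t,t+N_0)$, since the correlation of two distant blocks admits no elementary lower bound. \emph{Overcoming this localization is the main obstacle}, and it is exactly where almost-periodicity must enter. Via the Furstenberg correspondence principle one has $\delta_k\ge c(k):=\mu(\widetilde A\cap T^{-k}\widetilde A)$ for a measure-preserving system with $\mu(\widetilde A)=\delta$, where $c$ is positive-definite, $c(0)=\delta$, and, by the mean ergodic theorem, $\lim_K\frac1K\sum_{k=1}^K c(k)=\|P\mathbf 1_{\widetilde A}\|_2^2\ge\big(\textstyle\int\mathbf 1_{\widetilde A}\big)^2=\delta^2$; thus $c$ is Bohr almost-periodic of mean at least $\delta^2$. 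Khintchine's recurrence theorem then yields that $\{k:\ c(k)>\delta^2-\varepsilon\delta^2\}$ is syndetic, and since this set is contained in $F$ (because $\delta_k\ge c(k)$), Theorem \ref{THMSYNDETIC} follows; it is in this sense the combinatorial shadow of Khintchine's theorem. The delicate point — and the reason $N_0$ genuinely depends on $\delta$ and $\varepsilon$ — is precisely the uniform-in-offset control provided by this almost-periodicity (equivalently, the near-return times form a syndetic Bohr set through simultaneous Diophantine approximation of the frequencies carrying most of the spectral measure), which one can also implement finitarily, without ergodic theory, at the cost of a more involved but elementary argument replacing the step of the second paragraph.
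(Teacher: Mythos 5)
Your proof is correct, but it takes a genuinely different route from the paper's, and in fact your second paragraph contains the paper's entire idea without your realizing it. The paper performs exactly your second-moment/Cauchy--Schwarz computation (with $f(x)=\#\{k\in R;\ x\in A-k\}$, along a diagonal subsequence of dense scales where all limits $\eta_k=\lim \#B_k(m_i)/(2m_i+1)$ exist), concluding that any set $R$ with $\eta_{k-l}\leq(1-\varepsilon)\delta^2$ for all distinct $k,l\in R$ satisfies $\#R\leq(1-\delta(1-\varepsilon))/(\delta\varepsilon)$; it then takes a \emph{maximal} such $R$, and maximality means that for every $n\in\mathbb Z$ there is $k\in R$ with $\eta_{n-k}>(1-\varepsilon)\delta^2$, i.e.\ $F+R=\mathbb Z$ with $R$ finite, which is syndeticity. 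So the ``localization obstacle'' you flag in your third paragraph --- that good differences are found only inside $K-K$ --- is illusory: no almost-periodicity is needed, since the finite maximal bad set itself supplies the bounded gaps (the gap bound is the diameter of $R$, which exists even though it is not explicit in $\delta,\varepsilon$; syndeticity does not require an effective $N_0(\delta,\varepsilon)$). Your actual chain of deductions --- Furstenberg correspondence giving $\delta_k\geq c(k)=\mu(\widetilde A\cap T^{-k}\widetilde A)$ with $\mu(\widetilde A)=\delta$, then Khintchine's recurrence theorem making $\{k;\ c(k)>(1-\varepsilon)\delta^2\}$ syndetic, hence $F\supseteq$ a syndetic set --- is sound, and it is precisely the alternative proof described in the paper's own Remark; it buys brevity and conceptual context (the theorem as the combinatorial shadow of Khintchine) at the cost of two black boxes, whereas the paper's point was a self-contained elementary argument. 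Two small repairs: Khintchine yields $\{k;\ c(k)\geq\mu(\widetilde A)^2-\varepsilon'\}$ syndetic, so take $\varepsilon'=\varepsilon\delta^2/2$ to land strictly above $(1-\varepsilon)\delta^2$; and $c(k)$ is positive-definite but in general \emph{not} Bohr almost-periodic --- its spectral measure may have a continuous component, so by Wiener's lemma $c$ is only a Bohr almost-periodic part plus a part vanishing in density --- hence your parenthetical about ``near-return times forming a syndetic Bohr set'' should instead be routed, as the paper's remark indicates, through the uniform version of von Neumann's ergodic theorem (Cesàro means over all windows $[M+1,M+K]$), which is what actually powers Khintchine.
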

\begin{proof}
We select a sequence $(n_i)$ such that
 \[ \#A(n_i)/(2n_i+1) \to \delta .\]
 Then we select by a usual diagonal procedure a subsequence $(m_i)$ of $(n_i)$ such that for every $k\in\mathbb Z$ the limit
  \[ \eta_k = \lim \# B_k(m_i)/(2m_i+1) \]
  exists. Observing that $B_{k}=-k+B_{-k}$, one knows that $\eta_k=\eta_{-k}$. 
  Moreover, $\eta_k\leq \delta_k$ and we shall in fact prove that 
   \[ \mathbf F = \{ k; \eta_k > (1-\varepsilon)\delta^2 \} \]
   is syndetic.
 Let $R$ be a (finite) set with the property that
      \[ \eta_{k-l} \leq (1-\varepsilon)\delta^2 \]
      for all $k,l\in R$, $k \neq l$. We will see that the cardinality of such sets is uniformly bounded. We set $r=\#R$ and we put
       \[ f(x) = \# \{ k\in R; x \in A-k \} .\]
       We have
 \[ f(x)^2 = \# \{ k,l\in R; x \in(A-k) \cap (A-l) \}    = \# \{ k,l\in R; x+k \in A \cap (A+k-l) \}       .\]
 Clearly
  \[ \sum_{|x| \leq m} f(x) = \sum_{k\in R}\#\big\{x\in\{-m,\dots,m\};\ x\in A-k\big\}=r \#A(m) + O(1), \]
  hence
 \[ \frac{1}{2m_i+1} \sum_{|x| \leq m_i} f(x) \to r\delta . \]
 Similarly
\[ \sum_{|x| \leq m} f(x)^2 = \sum_{k,l\in R} \#B_{k-l}(m) + O(1), \]
  hence
 \[ \frac{1}{2m_i+1} \sum_{|x| \leq m_i} f(x)^2 \to\sum_{k,l\in R} \eta_{k-l} \leq r\delta + (1-\varepsilon)r(r-1)\delta^2 . \]
 Using the inequality of arithmetic and square means we get
  \[ (r\delta)^2 \leq r\delta + (1-\varepsilon)r(r-1)\delta^2, \]
  hence
   \[ r \leq \frac{1-\delta(1-\varepsilon)}{\delta\varepsilon} .\]

   Now select a maximal set $R$ (take 0, then the integer $n$ with the smallest absolute value which can be added and so on).
  This procedure stops after a finite number of steps.
 Maximality means that for every  integer $n$ there is a $k\in R$ such that $\eta_{n-k}> (1-\varepsilon)\delta^2$, that is, $n-k\in \mathbf F$,
which means that $\mathbf F+R=\setZ$. This amounts to say that $\mathbf F$ is syndetic.
\end{proof}
\begin{remark}
 The previous theorem is reminiscent from Khintchine's recurrence theorem which says the following: for any invertible
probability measure preserving system $(X,\mathcal B,\mu,T)$, for any $\varepsilon>0$ and any $A\in\mathcal B$, the set
$\{n\in\mathbb Z;\ \mu(A\cap T^nA)\geq \mu(A)^2-\varepsilon\}$ is syndetic. It turns out that we may deduce
Theorem \ref{THMSYNDETIC} from Khintchine's recurrence theorem using Furstenberg's correspondence principle
(see \cite{Fur77}), exactly as Furstenberg deduced the Szemer\'edi's theorem on arithmetic progressions
from his extension of the classical Poincar\'e's recurrence theorem (we refer to  \cite{Ber96} and to \cite{Fur77} 
for details).

One may prove Khintchine's recurrence theorem using the uniform version of von Neumann's ergodic theorem. One can also find in 
\cite{Ber96} a combinatorial proof of this theorem, which does not match exactly the proof of Theorem \ref{THMSYNDETIC}.
To keep a self-contained exposition, we have chosen to give a complete and elementary proof of Theorem \ref{THMSYNDETIC}.
\end{remark}

From this, we can deduce a result on series which is the key for the application to frequently hypercyclic weighted shifts.
\begin{Cor}\label{CORSERIES}
Let $(\alpha_n)_{n\in\mathbb Z}$ be a sequence of nonnegative real numbers such that $\sum_n \alpha_n=+\infty$.
Suppose that there exists some $C>0$ such that either $\alpha_n\geq C\alpha_{n-1}$ for every $n\in\mathbb Z$ or $\alpha_{n-1}\geq C\alpha_n$ for every $n\in\mathbb Z$.
Let $A\subset\setZ$ be a set with positive upper density and let, for $n\in A$, 
$$\beta_n=\sum_{m\in A}\alpha_{m-n}.$$
Then 
$$\limsup_{n\to+\infty}\frac 1{2n+1}\sum_{|m|\leq n,m\in A}\beta_m=+\infty.$$
\end{Cor}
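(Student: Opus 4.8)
The plan is to turn the average into a single sum weighted by the $\alpha_k$ and then feed in Theorem~\ref{THMSYNDETIC}. Writing $\beta_m=\sum_{m'\in A}\alpha_{m'-m}$ and substituting $k=m'-m$, the constraints $m\in A$ and $m'=m+k\in A$ become $m\in A\cap(A-k)=B_k$. Since every term is nonnegative, Tonelli lets me reorganize the double sum freely in $[0,+\infty]$, giving
\[
\frac{1}{2n+1}\sum_{|m|\le n,\,m\in A}\beta_m=\sum_{k\in\setZ}\alpha_k\,\frac{\#B_k(n)}{2n+1}.
\]
(If some $\beta_m=+\infty$ for an $m\in A$ the conclusion is immediate, so this identity carries all the content.)

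Next I would reuse the machinery from the proof of Theorem~\ref{THMSYNDETIC}: along the subsequence $(m_i)$ built there one has $\#B_k(m_i)/(2m_i+1)\to\eta_k$ for every $k$, and the set $\mathbf F=\{k;\ \eta_k>(1-\veps)\delta^2\}$ is syndetic, hence has gaps bounded by some $L$. For any finite $F_0\subseteq\mathbf F$, discarding the other (nonnegative) terms yields
\[
\frac{1}{2m_i+1}\sum_{|m|\le m_i,\,m\in A}\beta_m\ \ge\ \sum_{k\in F_0}\alpha_k\,\frac{\#B_k(m_i)}{2m_i+1}\ \xrightarrow[i\to\infty]{}\ \sum_{k\in F_0}\alpha_k\,\eta_k\ \ge\ (1-\veps)\delta^2\sum_{k\in F_0}\alpha_k.
\]
Thus $\limsup_n\frac{1}{2n+1}\sum_{|m|\le n,m\in A}\beta_m\ge(1-\veps)\delta^2\sum_{k\in F_0}\alpha_k$ for every finite $F_0\subseteq\mathbf F$, and the theorem follows once I show $\sum_{k\in\mathbf F}\alpha_k=+\infty$.

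Establishing $\sum_{k\in\mathbf F}\alpha_k=+\infty$ is the crux, and this is precisely where the one-sided ratio hypothesis on $(\alpha_n)$ enters: for a general syndetic set the statement is false, since the divergent mass of $\sum\alpha_k$ could concentrate off $\mathbf F$. Suppose first $\alpha_n\ge C\alpha_{n-1}$ for all $n$. As $\mathbf F$ has gaps at most $L$, for each $k$ I can pick $\phi(k)\in\mathbf F$ with $k\le\phi(k)\le k+L-1$; iterating the hypothesis gives $\alpha_{\phi(k)}\ge c\,\alpha_k$ with $c=\min(1,C^{L-1})>0$. The map $\phi$ is at most $L$-to-one, because $\phi(k)=v$ forces $v-L+1\le k\le v$. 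Summing,
\[
\sum_{|k|\le N}\alpha_k\ \le\ \frac1c\sum_{|k|\le N}\alpha_{\phi(k)}\ \le\ \frac Lc\sum_{k\in\mathbf F,\,|k|\le N+L}\alpha_k,
\]
and letting $N\to\infty$ forces $\sum_{k\in\mathbf F}\alpha_k=+\infty$ since $\sum_k\alpha_k=+\infty$. The alternative hypothesis $\alpha_{n-1}\ge C\alpha_n$ is handled identically, choosing $\phi(k)\in\mathbf F\cap[k-L+1,k]$. The main obstacle is exactly this comparison step: the bounded-gap property furnished by Theorem~\ref{THMSYNDETIC} must be coupled quantitatively to the allowed rate of variation of $(\alpha_n)$, as without the ratio control the conclusion breaks down.
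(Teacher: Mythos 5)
Your proof is correct and follows essentially the same route as the paper: both rewrite the averaged double sum as $\sum_k \alpha_k\,\#B_k(n)/(2n+1)$, pass to the subsequence $(m_i)$ from Theorem~\ref{THMSYNDETIC} to lower-bound it by $(1-\veps)\delta^2\sum_{k\in F_0}\alpha_k$ over finite $F_0\subset\mathbf F$, and then reduce everything to showing $\sum_{k\in\mathbf F}\alpha_k=+\infty$ from syndeticity plus the ratio hypothesis. Your bounded-multiplicity selection map $\phi$ is just a repackaging of the paper's estimate $\alpha_{f_j}\geq \frac{\min(1,C^M)}{M}\sum_{f_{j-1}<i\leq f_j}\alpha_i$ (note the paper's $\max$ there is evidently a typo for $\min$), so the two arguments are the same in substance.
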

In particular, the sequence $(\beta_n)_{n\in A}$ cannot be bounded.
\begin{proof}
We keep the notations of the previous theorem, which we apply with $\varepsilon=1/2$. We first show that
\begin{eqnarray}\label{EQCOR1}
\sum_{n\in F}\alpha_n=+\infty,
\end{eqnarray}
using only that $F$ is syndetic.
Indeed, write $F=(f_n)_{n\in \setZ}$ in increasing order with $f_0=\min\{f\in F;\ f\geq 0\}$. 
There exists some $M>0$ such that $f_{i+1}-f_i\leq M$ for every $i\in\setZ$. 
Assuming first that 
$\alpha_n \geq C \alpha_{n-1}$ for every $n$, we get
      \[ \alpha_{f_j} \geq\frac{\max(1,C^M)}{M} \sum_{f_{j-1}<i\leq f_j} \alpha_i, \]
      and (\ref{EQCOR1}) follows by summing this for all $j$. If $\alpha_{n-1}\geq C\alpha_n$ for every $n$, then we write
            \[ \alpha_{f_j} \geq\frac{\max(1,C^M)}M \sum_{f_{j}\leq i< f_{j+1}} \alpha_i, \]
and (\ref{EQCOR1}) follows also by summation.
Now, consider the sum
      \[ s_i=  \sum_{|n|\leq m_i, n \in A} \beta_n .     \]
      This can be rewritten as
       \[ s_i=  \sum_{|n|\leq m_i,  n,m \in A} \alpha_{m-n} .     \]
      We 
      group this sum according to the value of $k=m-n$, and keep only those terms where $k\in F$, $|k|<l$ for some fixed $l$.
      We get
       \[ s_i \geq \sum_{k\in F, |k|<l} \alpha_k \#B_k(m_i) . \]
       We divide by $2m_i+1$ and let $i\to\infty$. We get
       \begin{eqnarray*}
        \limsup_{i\to+\infty} \frac{1}{2m_i+1} \sum_{|n|\leq m_i, n \in A} \beta_n &\geq & \sum_{k\in F, |k|<l} \alpha_k
	\lim_{i\to+\infty} \frac{\#B_k(m_i)}{2m_i+1} \\
        &\geq& \sum_{k\in F, |k|<l} \alpha_k \eta_k \\
	&\geq& \frac{\delta^2}{2}  \sum_{k\in F, |k|<l} \alpha_k ,  
	\end{eqnarray*}
        and this can be arbitrarily large by (\ref{EQCOR1}).

   \end{proof}
   
\section{Frequently hypercyclic weighted shifts on $\ell^p$} \label{SECWS}

In this section, we prove Theorem \ref{THM1}. The proof of Theorem \ref{THM2} is similar but simpler. 
We first prove that $(ii)$ implies $(iii)$. Thus we start with a $\mathcal U$-frequently hypercyclic weighted shift
$B_{\mathbf w}$ on $\ell^p(\mathbb Z)$ and let $x$ be a $\mathcal U$-frequently hypercyclic vector for $B_{\mathbf w}$. 
Let 
$$A=\big\{n\in\mathbb Z_+;\ \|B_{\mathbf w}^n x-e_0\|_p\leq 1/2\big\},$$
which has positive upper density.
Let $m\in A$. Then $|w_1\cdots w_m x_m-1|\leq 1/2$ so that $|w_1\cdots w_m x_m|\geq 1/2$. 
Now, for any $n\in A$, we can also write
\begin{eqnarray*}
\frac1{2^p}&\geq& \|B_{\mathbf w}^n x-e_0\|^p_p\\
&\geq& \sum_{m\in A,m<n}(w_m\cdots w_1 w_0\cdots w_{m-n+1})^p |x_m|^p+\sum_{m\in A,m>n}(w_m\cdots w_{m-n+1})^p |x_m|^p\\
&\geq&\sum_{m\in A,m<n}(w_0\cdots w_{m-n+1})^p|w_1\cdots w_mx_m|^p+\sum_{m\in A,m>n}\frac{|w_1\cdots w_mx_m|^p}{(w_1\cdots w_{m-n})^p}.
\end{eqnarray*}
Putting this together, we get that for any $n\in A$,
$$\left\{
\begin{array}{rcl}
\displaystyle \sum_{m\in A,m<n}(w_0\cdots w_{m-n+1})^p&\leq& 1\\
\displaystyle \sum_{m\in A,m>n}\frac1{(w_1\cdots w_{m-n})^p}&\leq&1.
\end{array}\right.$$
Firstly, we set $\alpha_n=0$ provided $n\leq 0$ and $\alpha_n=\frac1{(w_1\cdots w_n)^p}$ provided $n>0$.
Because $(w_n)_{n\in\mathbb Z}$ is bounded, $\alpha_n\geq C\alpha_{n-1}$ for every $n\in\mathbb Z$.
Suppose that $\sum_{n\geq 1}\frac{1}{(w_1\cdots w_n)^p}=+\infty$. Then by Corollary \ref{CORSERIES},
the sequence $(\beta_n)_{n\in A}$ is unbounded, where 
$$\beta_n=\sum_{m\in A}\alpha_{m-n}=\sum_{m\in A,m>n}\alpha_{m-n}=\sum_{m\in A,m>n}\frac1{(w_1\cdots w_{m-n})^p}.$$
This is a contradiction. Secondly, set $\alpha_n=0$ provided $n\geq 0$ and $\alpha_n=(w_0\cdots w_{-n+1})^p$ provided $n<0$. 
Because $(w_n)_{n\in\mathbb Z}$ is bounded, $\alpha_n\leq C\alpha_{n-1}$ for any $n\in\mathbb Z$. Suppose that
$\sum_{n<0}(w_0\cdots w_n)^p=+\infty$. Then by Corollary \ref{CORSERIES}, $(\beta_n)$ is unbounded where 
$$\beta_n=\sum_{m\in A}\alpha_{m-n}=\sum_{m\in A,m<n}\alpha_{m-n}=\sum_{m\in A,m<n}(w_0\cdots w_{m-n})^p.$$
This is also a contradiction, since $w_0\cdots w_{m-n}\leq C w_0\cdots w_{m-n+1}$.

\smallskip

Let us now show that the condition is sufficient. This follows from a standard application of the frequent hypercyclicity criterion of \cite{BoGre07}, which we recall for convenience:
\begin{Th}
Let $T\in\mathfrak L(X)$, where $X$ is a separable Banach space. Assume that there exists a dense set $\mathcal D\subset X$ and a map
$S:\mathcal D\to\mathcal D$ such that 
\begin{enumerate}
\item $\sum T^n(x)$ and $\sum S^n(x)$ converge unconditionally for any $x\in\mathcal D$;
\item $TS=I$ on $\mathcal D$.
\end{enumerate}
Then $T$ is frequently hypercyclic.
\end{Th}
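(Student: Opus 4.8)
The plan is to build a single frequently hypercyclic vector by the Bayart--Grivaux superposition technique: spread $S$-shifted copies of a dense sequence along disjoint sets of positive lower density that are pairwise very far apart. First I fix a sequence $(y_k)_{k\geq 1}$ in $\mathcal D$ that is dense in $X$ and in which every point of a fixed countable dense subset of $\mathcal D$ occurs infinitely often, together with a sequence $\veps_k\downarrow 0$. Since $TS=I$ on $\mathcal D$ and $S(\mathcal D)\subset\mathcal D$, an immediate induction gives $T^nS^n=I$ on $\mathcal D$, so for $x\in\mathcal D$ one has $T^nS^mx=S^{m-n}x$ when $m>n$ and $T^nS^mx=T^{n-m}x$ when $m<n$. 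These identities are what will let me isolate, in the orbit of the candidate vector, one exact term together with a controllable error.

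The construction rests on a combinatorial lemma of the type introduced in \cite{BAYGRITAMS}: for any prescribed sequence $(N_k)$ of positive integers there exist pairwise disjoint sets $A_k\subset\mathbb Z_+$, each of positive lower density, with $\min A_k\geq N_k$ and with the separation property that $|n-m|\geq N_{\max(k,l)}$ whenever $n\in A_k$, $m\in A_l$ and $(k,n)\neq(l,m)$. Here the densities must be taken small, of order $2^{-k}/N_k$, which is precisely what allows such widely separated sets to coexist; establishing this lemma — simultaneous disjointness, the uniform separation, and positivity of each lower density — is the first genuine task. The thresholds $N_k$ are then pinned down by unconditional convergence: I require $N_k$ so large that for every $l\leq k$ the tails $\sup_{G\subset[N_k,\infty)}\|\sum_{p\in G}S^py_l\|$ and $\sup_{G\subset[N_k,\infty)}\|\sum_{p\in G}T^py_l\|$ are $<\veps_k2^{-l-1}$, and also that $\|\sum_{n\in A_k}S^ny_k\|<2^{-k}$. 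Each is a finite list of requirements, forcing only that $N_k$ be large, and this is exactly where \emph{unconditional} (rather than mere) convergence is indispensable, since the index sets arising below will be arbitrary subsets of tails.

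With these choices I set $z=\sum_{k}\sum_{n\in A_k}S^ny_k$, which converges in $X$ because $\sum_k\|\sum_{n\in A_k}S^ny_k\|<\sum_k2^{-k}<\infty$. Fixing $k_0$ and $n\in A_{k_0}$, disjointness of the $A_l$ makes $y_{k_0}$ the unique term with $m=n$, so $T^nz-y_{k_0}=\sum_{l}\sum_{m\in A_l,\,m>n}S^{m-n}y_l+\sum_{l}\sum_{m\in A_l,\,m<n}T^{n-m}y_l$, every exponent occurring here being $\geq N_{\max(k_0,l)}$ by the separation property. Bounding each inner sum by the relevant unconditional tail — the terms with $l\leq k_0$ through $N_{k_0}$ and those with $l>k_0$ through $N_l$ (using $\veps_l\leq\veps_{k_0}$) — yields $\|T^nz-y_{k_0}\|<\veps_{k_0}$ for \emph{every} $n\in A_{k_0}$. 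Consequently $\{n:\|T^nz-y_{k_0}\|<\veps_{k_0}\}\supseteq A_{k_0}$ has positive lower density.

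Finally, since $\veps_k\to0$ and $(y_k)$ is dense with infinitely many repetitions, every nonempty open $U\subset X$ contains a ball $B(y_{k_0},\veps_{k_0})$, whence $\{n:T^nz\in U\}$ has positive lower density and $z\in FHC(T)$. I expect the main obstacle to be the combinatorial lemma together with the bookkeeping of the thresholds $N_k$: the whole estimate succeeds only because the separation is engineered so that each error term lands beyond a prescribed tail, and controlling those tails over the arbitrary subsets generated by the sets $A_l$ is exactly the content of the unconditional convergence hypothesis.
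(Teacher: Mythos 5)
Your proof is correct, and it is essentially the proof behind the statement: the paper itself does not prove this criterion (it recalls it from \cite{BoGre07}), and your construction --- pairwise disjoint sets $A_k$ of positive lower density with separation thresholds $N_k$ calibrated to the unconditional tails of $\sum_n S^n y_l$ and $\sum_n T^n y_l$, the vector $z=\sum_k\sum_{n\in A_k}S^n y_k$, and the decomposition of $T^n z-y_{k_0}$ into $S^{m-n}y_l$- and $T^{n-m}y_l$-terms via $T^nS^n=I$ on $\mathcal D$ --- is exactly the standard Bayart--Grivaux argument as refined by Bonilla--Grosse-Erdmann. The combinatorial lemma you invoke without proof (and rightly flag as the one genuine task) is a known result (e.g.\ it appears in \cite{BM09} and \cite{GePeBOOK}, with the stronger separation $|n-m|\geq N_k+N_l$, which implies your $N_{\max(k,l)}$ version for nondecreasing $(N_k)$), and the remaining bookkeeping --- including the fact that $\bigl\|\sum_{n\in A_k}S^n y_k\bigr\|<2^{-k}$ follows from $A_k\subset[N_k,\infty)$ together with the unconditional tail bound --- checks out.
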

In our situation, we define $S$ by $S(e_n)=w_{n+1}^{-1}e_{n+1}$ and let $\mathcal D$ be the set of finitely supported sequences.
It is easy to check that $\sum_{n<0}(w_{-1}\cdots w_{-n})^p<+\infty$ implies that
the series $\sum_n B_{\mathbf w}^n x$ is unconditionally convergent for any $x\in\mathcal D$.
In the same vein, the condition $\sum_{n\geq 1}\frac{1}{(w_1\cdots w_n)^p}<+\infty$ implies that
$\sum_n S^n x$ is unconditionally convergent for any $x\in\mathcal D$. Thus, $B_{\mathbf w}$ is frequently hypercyclic.

\medskip

Our result implies the following interesting corollary.
\begin{Cor}
Let $\mathbf w=(w_n)_{n\in\mathbb Z}$ be a bounded and bounded below sequence of positive real numbers.
Then $B_{\mathbf w}$ is frequently hypercyclic on $\ell^p(\mathbb Z)$ if and only if $B_{\mathbf w}^{-1}$ is frequently hypercyclic on $\ell^p(\mathbb Z)$.
\end{Cor}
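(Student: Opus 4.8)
The plan is to reduce the statement to Theorem~\ref{THM1} by exploiting the fact that the inverse of a backward weighted shift becomes, after a reflection of the coordinates, again a backward weighted shift. Since $\mathbf w$ is bounded and bounded below, $B_{\mathbf w}$ is invertible in $\mathfrak L(\ell^p(\mathbb Z))$, and a direct computation on the basis gives $B_{\mathbf w}^{-1}(e_n)=w_{n+1}^{-1}e_{n+1}$; this is a \emph{forward} weighted shift, to which Theorem~\ref{THM1} does not apply directly. I would therefore first conjugate by the coordinate flip.

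Concretely, let $J\colon\ell^p(\mathbb Z)\to\ell^p(\mathbb Z)$ be the isometric isomorphism defined by $J(e_n)=e_{-n}$, so that $J^2=I$. A short calculation on the basis vectors yields
\[ (JB_{\mathbf w}^{-1}J)(e_n)=w_{1-n}^{-1}e_{n-1}, \]
so that $JB_{\mathbf w}^{-1}J=B_{\mathbf v}$ is the backward weighted shift associated with the sequence $\mathbf v=(v_n)_{n\in\mathbb Z}$, $v_n=w_{1-n}^{-1}$. Since $\mathbf w$ is bounded and bounded below, so is $\mathbf v$, whence $B_{\mathbf v}$ is a bounded backward weighted shift to which Theorem~\ref{THM1} applies. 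Frequent hypercyclicity being preserved under conjugation by the (isometric) isomorphism $J$, the operator $B_{\mathbf w}^{-1}$ is frequently hypercyclic if and only if $B_{\mathbf v}$ is.

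It then remains to compare condition (iii) of Theorem~\ref{THM1} for $B_{\mathbf w}$ and for $B_{\mathbf v}$. Writing out the partial products, one finds $v_1\cdots v_n=(w_0w_{-1}\cdots w_{1-n})^{-1}$ and $v_{-1}\cdots v_{-m}=(w_2w_3\cdots w_{m+1})^{-1}$, so the two series governing $B_{\mathbf v}$ are
\[ \sum_{n\geq 1}\frac{1}{(v_1\cdots v_n)^p}=\sum_{n\geq 1}(w_0w_{-1}\cdots w_{1-n})^p,\qquad \sum_{n<0}(v_{-1}\cdots v_n)^p=\sum_{m\geq 1}\frac{1}{(w_2\cdots w_{m+1})^p}. \]
Because $\mathbf w$ is bounded and bounded below, the fixed factors $w_0^p$ and $w_1^p$ (and the inclusion or omission of finitely many bounded terms) do not affect convergence; hence the first series above converges if and only if $\sum_{n<0}(w_{-1}\cdots w_n)^p$ does, and the second if and only if $\sum_{n\geq 1}(w_1\cdots w_n)^{-p}$ does. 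Thus condition (iii) holds for $B_{\mathbf v}$ exactly when it holds for $B_{\mathbf w}$ (with the two series interchanged), which by Theorem~\ref{THM1} gives the desired equivalence.

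I expect the only delicate point to be the index bookkeeping in this last comparison: one must keep track of precisely which partial products appear and check that the ``positive'' series for one operator corresponds to the ``negative'' series for the other, the boundedness of $\mathbf w$ both from above and from below being exactly what is needed to absorb the stray endpoint factors.
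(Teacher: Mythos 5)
Your proof is correct and is exactly the intended argument: the paper states this corollary without proof as an immediate consequence of Theorem~\ref{THM1}, and the route via conjugating $B_{\mathbf w}^{-1}$ by the flip $J(e_n)=e_{-n}$ to obtain the backward shift with weights $v_n=w_{1-n}^{-1}$, then checking that the two series of condition~(iii) are interchanged (up to the harmless endpoint factors $w_0^p$, $w_1^p$), is precisely how it follows. Your index bookkeeping checks out, so there is nothing to add.
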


\section{Frequently hypercyclic weighted shifts on $c_0$}\label{SECCARACC0}

In this section, we give a characterization of (invertible) frequently hypercyclic weighted shifts on $c_0(\mathbb Z_+)$
and $c_0(\mathbb Z)$. Because of \cite{BAYGRIPLMS}, we know that we cannot expect a statement as clean as Theorem \ref{THM2}.
Nevertheless, it will be useful in the forthcoming examples. We begin with invertible bilateral weighted shifts.
\begin{Th}\label{THMCARACBILATERAL}
 Let $\mathbf w=(w_n)_{n\in\mathbb Z}$ be a bounded and bounded below sequence of positive integers. Then 
$B_{\mathbf w}$ is frequently hypercyclic (resp. $\mathcal U$-frequently hypercyclic) on $c_0(\mathbb Z)$ if and only if there exist
a sequence $(M(p))$ of positive real numbers tending to $+\infty$ and a sequence $(E_p)$ of subsets of $\mathbb Z_+$
such that
\begin{enumerate}[(a)]
 \item For any $p\geq 1$, $\underline d(E_p)>0$ (resp. $\bar d(E_p)>0$);
\item For any $p,q\geq 1$, $p\neq q$, $(E_p+[-p,p])\cap (E_q+[-q,q])=\emptyset$;
\item $\lim_{n\to+\infty,\ n\in E_p}w_1\cdots w_n=+\infty$;
\item For any $p,q\geq 1$, for any $n\in E_p$ and any $m\in E_q$ with $n\neq m$, 
$$\left\{\begin{array}{rcll}
                   w_1\cdots w_{m-n}&\geq&M(p)M(q)&\textrm{ provided } m>n\\
w_{m-n+1}\cdots w_0&\leq&\displaystyle \frac1{M(p)M(q)}&\textrm{ provided }m<n.
                  \end{array}\right.$$
\end{enumerate}
\end{Th}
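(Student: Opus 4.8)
The plan is to prove both implications of the equivalence, treating the frequently hypercyclic and the $\mathcal U$-frequently hypercyclic cases in parallel (everywhere replacing lower density by upper density for the $\mathcal U$-version). Throughout I will use the explicit formula $(B_{\mathbf w}^n x)_j = x_{j+n}\,w_{j+1}\cdots w_{j+n}$, so that reading the orbit at time $n$ near a coordinate amounts to inspecting a coefficient $x_{j+n}$ renormalised by a product of weights. The whole argument is an exercise in controlling such products, for which conditions (c) and (d) are tailor-made; I write $W=\sup_n w_n$ and $w_*=\inf_n w_n$.

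For necessity I would start from a (resp. $\mathcal U$-)frequently hypercyclic vector $x$ and test it against the special targets $a_pe_0$, setting $E_p=\{n\ge 0:\ \|B_{\mathbf w}^n x-a_pe_0\|<\veps_p\}$ for parameters $a_p\to+\infty$ and $\veps_p\to0$ to be fixed at the end. Condition (a) is then immediate from the definition of (resp. $\mathcal U$-)frequent hypercyclicity applied to the ball $B(a_pe_0,\veps_p)$. Reading coordinate $0$ gives $x_n(w_1\cdots w_n)\approx a_p$, and since $x\in c_0$ forces $x_n\to0$ along $E_p$, we obtain $w_1\cdots w_n\to+\infty$, i.e. (c). For (d) the crucial observation is that the sup norm controls \emph{every} coordinate: for $n\in E_p$, $m\in E_q$ with $m>n$, I compare coordinate $0$ of $B_{\mathbf w}^mx$ (which is at least $a_q-\veps_q$) with coordinate $m-n$ of $B_{\mathbf w}^nx$ (which is at most $\veps_p$ in modulus), the common factor $x_m$ cancelling, to get $w_1\cdots w_{m-n}\ge (a_q-\veps_q)/\veps_p$; the symmetric reading gives $w_{m-n+1}\cdots w_0\le \veps_p/(a_q-\veps_q)$ when $m<n$. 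It then suffices to choose $a_p,\veps_p,M(p)$ (say $\veps_p$ of order $a_p^{-1}$, $a_p$ growing geometrically, $M(p)$ of intermediate size) so that $M(p)M(q)\le (a_q-\veps_q)/\veps_p$, which yields (d), while at the same time $M(p)M(q)>W^{p+q}$ and $|a_p-a_q|>\veps_p+\veps_q$. These last two inequalities force (b): if two windows overlapped, either they shared a common time (contradicting that the balls around $a_pe_0$ and $a_qe_0$ are disjoint) or $|m-n|\le p+q$, whence $w_1\cdots w_{m-n}\le W^{p+q}<M(p)M(q)$, against the bound just proved.

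For sufficiency I would build a single (resp. $\mathcal U$-)frequently hypercyclic vector by superposition. After first thinning each $E_p$ — preserving positive lower (resp. upper) density — so that its elements are mutually separated by more than the target width, so that $w_1\cdots w_n\ge M(p)^2$ on it (dropping finitely many points, using (c)) and so that its least element is large, I set $x_{j+n}=z^{(p)}_j/(w_{j+1}\cdots w_{j+n})$, where $(z^{(p)})_p$ is a dense sequence of finitely supported vectors. The decisive choice is to take $z^{(p)}$ with support radius $r_p$ of order $\log M(p)\to+\infty$ and norm at most $\sqrt{M(p)}$, so that the sequence is still dense while all the shifts occurring in the estimates remain small compared with the inter-window gaps produced by (d). Then (b) together with the within-$p$ separation makes the supports of the placements disjoint, so that for $n\in E_p$ the coordinates $|j|\le p$ of $B_{\mathbf w}^nx$ reproduce $z^{(p)}$ exactly; (c), through the renormalisation and the bound $w_1\cdots w_n\ge M(p)^2$, gives $x\in c_0$ and the decay at $\pm\infty$; and (d) controls the contamination from the other windows at the coordinates $|j|>p$. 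The error when reading at $n\in E_p$ is then at most a constant over $M(p)$, which tends to $0$, so each $E_p$ sits inside the set of times at which $B_{\mathbf w}^nx$ is that close to $z^{(p)}$; since the $z^{(p)}$ are dense and the radii shrink, every nonempty open set contains one such ball, whose visiting set has positive lower (resp. upper) density.

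The main obstacle is exactly this sufficiency estimate on the contamination at the coordinates $|j|>p$, namely the contribution to $B_{\mathbf w}^nx$ of placements belonging to windows other than the one being read. A placement centred at $n'\in E_{p'}$ contributes, after the weights telescope, a term of the form $z^{(p')}_i\,(w_{j+1}\cdots w_i)$; bounding $w_{j+1}\cdots w_i$ by a bounded-shift comparison (at cost $(W/w_*)^{r_{p'}}\le\sqrt{M(p')}$) against the products appearing in (d) — which are at least $M(p)M(p')$ or at most $1/(M(p)M(p'))$ — is precisely what makes the term small, and it is here that the slow growth of $r_{p'}$ relative to the gaps forced by (d) is indispensable. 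Making these bounds close uniformly, so as to guarantee simultaneously membership in $c_0$ and the sup-norm approximation of the targets, is the delicate heart of the proof; the additive/number-theoretic input enters only through the positive densities in condition (a).
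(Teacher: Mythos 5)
Your proposal is correct and follows essentially the same route as the paper's proof: necessity by testing a ($\mathcal U$-)frequently hypercyclic vector against distinguished targets of rapidly growing amplitude (yielding (a), (c), (d) by comparing renormalized readings of the same coordinate of $x$), and sufficiency by superposing, along thinned copies of the sets $E_p$, weighted translates of a dense sequence of finitely supported vectors, with (d) controlling the cross-window contamination up to a bounded-shift factor. Your deviations are minor and sound: you use spike targets $a_pe_0$ and deduce (b) indirectly from the (d)-estimates together with $M(p)M(q)>W^{p+q}$ and the separation of the amplitudes $a_p$, where the paper uses plateau targets $\omega_p(e_{-p}+\dots+e_p)$ and a direct two-sided coefficient comparison; and in the sufficiency part you scale the targets' norm and support radius by $\sqrt{M(p)}$ and $\log M(p)$ instead of the paper's renormalization to $M(p)\geq\rho^{4p}$ --- with the one caveat, which your final paragraph in effect already addresses, that disjointness of the placements for radii $r_p$ possibly exceeding $p$ must be drawn from the gaps $|m-n|\geq \log\bigl(M(p)M(q)\bigr)/\log\rho$ forced by (d) (plus (b) to exclude $E_p\cap E_q\neq\emptyset$), not from (b) alone.
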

\begin{proof}
 We first observe that we may replace ``there exists a sequence $(M(p))$'' by ``for any sequence $(M(p))$'' in the statement of
the previous theorem. Indeed, if properties (a) to (d) are true for some sequence $(M(p))$, then they are also satisfied for any sequence
$(M(p))$, considering instead of $(E_p)$ a subsequence of $(E_p)$ if necessary. 

We just prove the frequently hypercyclic case, the $\mathcal U$-frequently hypercyclic one being completely similar. We first assume that
$B_{\mathbf w}$ is frequently hypercyclic and we let $x\in FHC(B_{\mathbf w})$.
Let us fix $\rho>1$ such that $\rho^{-1}\leq w_k\leq \rho$ for any $k\in\mathbb Z$. Let us also consider a sequence $(\omega_p)$
of positive real numbers such that $\omega_1=2$ and, for any $p\geq 2$, $\omega_p>4\omega_{p-1}\rho^{2p+1}$. We set 
$$E_p=\left\{n\in\mathbb Z_+;\ \left\|B_{\mathbf w}^n x-\omega_p(e_{-p}+\dots+e_p)\right\|_{\infty}<\frac 1p\right\}.$$
Since $x$ belongs to $FHC(B_{\mathbf w})$, $E_p$
has positive lower density. Let $p\neq q$ and let us show that $(E_p+[-p,p])\cap(E_q+[-q,q])=\emptyset$. By contradiction, 
let us assume that $(n,s,m,t)\in E_p\times[-p,-p]\times E_q\times[-q,q]$ with $n+s=m+t$. Without loss of generality,
we may assume $p<q$. $w_{s+1}\cdots w_{n+s}x_{n+s}$ is the
$s$-th coefficient of $B_{\mathbf w}^n x$. Its modulus is smaller than $2\omega_p$. Similarly, $w_{t+1}\cdots w_{m+t}x_{m+t}$ 
is the $t$-th coefficient of $B_{\mathbf w}^m x$. Its modulus is greater than $\omega_q/2$. Moreover, $w_{s+1}\cdots w_{n+s}x_{n+s}$ and
$w_{t+1}\cdots w_{m+t}x_{m+t}$ differ by at most $(2q+1)$ coefficients of the sequence $\mathbf w$. 
Hence,
\begin{eqnarray}
 \label{EQCARACFHC3}
\left(\frac 1\rho\right)^{2q+1}\leq \frac{w_{s+1}\cdots w_{n+s}|x_{n+s}|}{w_{t+1}\cdots w_{m+t}|x_{m+t}|}\leq 2\omega_p\times\frac 2{\omega_q}.
\end{eqnarray}
This contradicts the definition of $(\omega_n)$. 

Moreover, pick $n\in E_p$ and look at the $0$-th coefficient of $B_{\mathbf w}^n x$. It is equal to $w_1\cdots w_n x_n$ and its modulus cannot be less than $\omega_p/2$.
Since $x\in c_0(\mathbb Z)$, we get that $w_1\cdots w_n$ tends to $+\infty$ when $n$ goes to infinity, $n\in E_p$. Fix another $m\in E_q$, $m\neq n$
and look at the $(n-m)$-th coefficient of $B_{\mathbf w}^m x$. This coefficient is equal to $w_{n-m+1}\cdots w_n x_n$ and its modulus is less than $1/q$
(recall that $|n-m|>q$). If $n>m$, then since $w_1\cdots w_n x_n\geq\omega_p/2$, we can deduce
$$w_1\cdots w_{n-m}\geq \frac{w_1\cdots w_n |x_n|}{w_{n-m+1}\cdots w_n |x_n|}\geq q\frac{\omega_p}2.$$
Similarly, if $n<m$, then
$$w_{n-m+1}\cdots w_0\leq \frac{w_{n-m+1}\cdots w_n |x_n|}{w_1\cdots w_n |x_n|}\leq \frac{2}{q\omega_p}.$$
This shows (d) with $M(p)=p$.

\medskip

We now show that the condition is sufficient. As pointed out above, we may assume that, for any $p\geq 1$, $M(p)\geq \rho^{4p}$. We set
$$E'_p=E_p\backslash \big\{n\in\mathbb N;\ w_1\cdots w_n\leq \rho^{4p}\big\}.$$
$E'_p$ is a cofinite subset of $E_p$, hence it has positive lower density. We write $E'_p=(n_k^p)_{k\geq 0}$ in an increasing order
and we set $F_p=(n_{(2p+1)k}^p)_{k\geq 0}$. $F_p$ has positive lower density and $|n-m|>2p$ provided $n,m$ are two
distinct elements of $F_p$. 

Let $(y(p))_{p\geq 1}$ be a dense sequence 
in $c_0(\mathbb Z)$ such that the support of $y(p)$ is contained in $[-p,-p]$ and such that $\|y(p)\|_{\infty}\leq\rho^p.$
We define $x\in\mathbb C^{\mathbb N}$ by setting
$$x_k=\left\{
\begin{array}{ll}
\frac 1{w_{s+1}\cdots w_{n+s}}y_p(s)&\textrm{ for }k=n+s,\ n\in F_p,\ |s|\leq p\\
0&\textrm{ otherwise.}
\end{array}\right.$$
This definition is not ambiguous because of (b) and the definition of $F_p$. We claim that $x$ belongs to $c_0(\mathbb Z)$. Indeed, let $\veps>0$. 
For $p\geq 1$ and $n\in F_p$, $|s|\leq p$,
\begin{eqnarray}
 \label{EQCARACFHC1}
|x_k|&\leq&\frac{\rho^{2p}}{w_1\cdots w_n}\times\rho^p\leq \rho^{-p}\leq\veps
\end{eqnarray}
provided $p$ is greater than some $p_0\geq 1$. Now, fix $p\leq p_0$. Then by (c), 
$x_k$ goes to zero when $k$ goes to $+\infty$, $k$ staying in $F_p+[-p,-p]$. 

We then show that $x$ is a frequently hypercyclic vector for $B_{\mathbf w}$. It is sufficient to prove that,
for any $p\geq 1$ and any $n\in F_p$, $\|B_{\mathbf w}^n x-y(p)\|_\infty\leq\veps(p)$ with $\veps(p)\to 0$ as
$p$ goes to $+\infty$. We observe that
$$\|B_{\mathbf w}^n x-y(p)\|_{\infty}=\sup_{s\notin [-p,p]}|w_{s+1}\cdots w_{n+s}x_{n+s}|.$$
The terms which appear in the sup-norm are nonzero if and only if $n+s=m+t$, for some $m\in E_q$, $q\geq 1$,
and $t\in [-q,q]$. We distinguish two cases. First, if $m>n$, then we write
$$w_{s+1}\cdots w_{n+s}x_{n+s}=\left\{
 \begin{array}{ll}
 \displaystyle \frac{w_1\cdots w_t}{w_1\cdots w_{m-n+t}}y_t(q)&\textrm{if }t\geq 1\\[3mm]
 \displaystyle \frac{1}{w_{t+1}\cdots w_{0}}\times\frac 1{w_1\cdots w_{m-n+t}}y_t(q)&\textrm{if }t\leq 0.
 \end{array}
 \right.$$
Now, $w_1\cdots w_t\leq \rho^q$ if $t\geq 0$, $(w_{t+1}\cdots w_0)^{-1}\leq \rho^q$ if $t<0$, so that in both cases
\begin{eqnarray}
 \label{EQCARACFHC2}
|w_{s+1}\cdots w_{n+s}x_{n+s}|\leq \frac{\rho^{2q}}{w_1\cdots w_{m-n+t}}\leq\frac{\rho^{2q}\rho^p}{w_1\cdots w_{m-n}}\leq 
\frac{\rho^{2q}\rho^p}{\rho^{4q}\rho^{4p}}\leq \rho^{-3p}.
\end{eqnarray}
Second, if $m<n$, then we write
\begin{eqnarray*}
 w_{s+1}\cdots w_{n+s}x_{n+s}&=&w_{m-n+t+1}\cdots w_t w_{t+1}\cdots w_{m+t}x_{m+t}\\
&=&w_{m-n+t+1}\cdots w_t y_t(q)\\
&=&\left\{
\begin{array}{ll}
 \displaystyle w_{m-n+t+1}\cdots w_0w_1\cdots w_t y_t(q)&\textrm{if }t\geq 1\\[1mm]
\displaystyle \frac{w_{m-n+t+1}\cdots w_0}{w_{t+1}\cdots w_0}y_t(q)&\textrm{if }t\leq 0.
\end{array}\right.
\end{eqnarray*}
We conclude as before.
\end{proof}

We turn to unilateral weighted shifts. A similar statement holds.
\begin{Th}\label{THMCARACUNILATERAL}
 Let $\mathbf w=(w_n)_{n\in\mathbb Z_+}$ be a bounded sequence of positive integers. Then 
$B_{\mathbf w}$ is frequently hypercyclic (resp. $\mathcal U$-frequently hypercyclic) on $c_0(\mathbb Z_+)$ if and only if there exist
a sequence $(M(p))$ of positive real numbers tending to $+\infty$ and a sequence $(E_p)$ of subsets of $\mathbb Z_+$
such that
\begin{enumerate}[(a)]
 \item For any $p\geq 1$, $\underline d(E_p)>0$ (resp. $\bar d(E_p)>0$);
\item For any $p,q\geq 1$, $p\neq q$, $(E_p+[0,p])\cap (E_q+[0,q])=\emptyset$;
\item $\lim_{n\to+\infty,\ n\in E_p+[0,p]}w_1\cdots w_n=+\infty$;
\item For any $p,q\geq 1$, for any $n\in E_p$ and any $m\in E_q$ with $m>n$, 
for any $t\in\{0,\dots,q\}$,
$$w_1\cdots w_{m-n+t}\geq M(p)M(q).$$
\end{enumerate}
\end{Th}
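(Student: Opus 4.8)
The plan is to imitate the proof of Theorem~\ref{THMCARACBILATERAL} almost verbatim, making the bookkeeping adjustments forced by the half-line: the test vectors are now supported on $\{0,\dots,p\}$ rather than on $\{-p,\dots,p\}$, the enlarging intervals are $[0,p]$, and—crucially—condition~(d) retains only the inequality for $m>n$. As there, it suffices to treat the frequently hypercyclic case, the $\mathcal U$-frequent one being obtained by replacing $\underline d$ with $\bar d$ throughout. I would also begin, exactly as in Theorem~\ref{THMCARACBILATERAL}, by noting that ``there exists $(M(p))$'' may be replaced by ``for any $(M(p))$'' (pass to a subsequence of $(E_p)$), so that in the sufficiency part I may assume $M(p)\ge \rho^{4p}$, where $\rho>1$ is fixed with $\rho^{-1}\le w_k\le\rho$ for all $k$.

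For necessity I would fix $x\in FHC(B_{\mathbf w})$, choose $(\omega_p)$ increasing fast enough (say $\omega_1=2$ and $\omega_p>4\omega_{p-1}\rho^{p+1}$), and set
$$E_p=\Big\{n\in\mathbb Z_+;\ \big\|B_{\mathbf w}^n x-\omega_p(e_0+\dots+e_p)\big\|_\infty<\tfrac1p\Big\}.$$
Then (a) is immediate. For (b) I would run the same modulus comparison as in Theorem~\ref{THMCARACBILATERAL}: if $n+s=m+t$ with $n\in E_p$, $s\in[0,p]$, $m\in E_q$, $t\in[0,q]$ and $p<q$, then the $s$-th coordinate of $B_{\mathbf w}^n x$ and the $t$-th coordinate of $B_{\mathbf w}^m x$ are products of weights ending at the same index and differing by at most $q+1$ factors, with moduli $\le 2\omega_p$ and $\ge \omega_q/2$; this forces $\rho^{-(q+1)}\le 4\omega_p/\omega_q$, contradicting the growth of $(\omega_p)$. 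Property (c) follows by looking, for $n\in E_p$ and $s\in[0,p]$, at the $s$-th coordinate $w_{s+1}\cdots w_{n+s}x_{n+s}$, of modulus $\ge \omega_p/2$: since $x\in c_0$ and $w_1\cdots w_s$ is bounded, $w_1\cdots w_k\to\infty$ along $k=n+s\in E_p+[0,p]$. For (d), with $m>n$ and $t\in\{0,\dots,q\}$, I would compare the $t$-th coordinate of $B_{\mathbf w}^m x$ (modulus $\ge\omega_q/2$) with the $(m-n+t)$-th coordinate of $B_{\mathbf w}^n x$ (modulus $<1/p$, since $m-n+t>p$); dividing and correcting for the bounded weights gives $w_1\cdots w_{m-n+t}$ bounded below by a quantity growing in both $p$ and $q$, hence (d) with $M(p)=p$. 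To guarantee $m-n+t>p$ even when $p=q$, I would first thin each $E_p$ (keeping, say, every $(p+1)$-th element, which preserves positive lower density) so that its distinct elements lie more than $p$ apart; the thinned sets still satisfy (a)–(c) and inherit (b).

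For sufficiency I would, as in Theorem~\ref{THMCARACBILATERAL}, discard from each $E_p$ the finitely many $n$ with $w_1\cdots w_n\le\rho^{4p}$ (legitimate by (c)) and then thin to $F_p$ with positive lower density whose distinct elements are more than $2p$ apart. Taking a dense sequence $(y(p))$ in $c_0(\mathbb Z_+)$ with $y(p)$ supported in $[0,p]$ and $\|y(p)\|_\infty\le\rho^p$, I define $x_{n+s}=(w_{s+1}\cdots w_{n+s})^{-1}y_p(s)$ for $n\in F_p$, $s\in[0,p]$, and $x_k=0$ otherwise; the spacing of $F_p$ together with (b) makes this unambiguous, and \eqref{EQCARACFHC1} shows $x\in c_0(\mathbb Z_+)$. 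Since $(B_{\mathbf w}^n x)_s=y_p(s)$ for $n\in F_p$ and $s\in[0,p]$, everything reduces to bounding $(B_{\mathbf w}^n x)_s$ for $s>p$.

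This last bound is the step I expect to require the most care, and it is the one genuinely different from the bilateral argument, because it must go through using only the $m>n$ half of (d). A nonzero $(B_{\mathbf w}^n x)_s$ with $s>p$ forces $n+s=m+t$ for some $m\in F_q$, $t\in[0,q]$, and then $m-n=s-t$. When $m>n$ the term is controlled by (d) exactly as in \eqref{EQCARACFHC2}, yielding a bound of the form $\rho^{2q}/(M(p)M(q))\le\rho^{-3p}$. The key observation is that the complementary case $m\le n$ \emph{cannot occur}: if $m<n$ then $n-m=t-s\in(0,q)$, so $n\in(F_p+[0,p])\cap(F_q+[0,q])$, empty by (b) when $p\neq q$; and $q=p$ forces $m>n$ since $s>p\ge t$. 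This is precisely why, on the half-line, the ``$m<n$'' inequality of the bilateral (d) disappears—the target sits at the boundary $[0,p]$ of $\mathbb Z_+$, so no coordinate can drop below it. Once this dichotomy is in place, summing the estimates gives $\|B_{\mathbf w}^n x-y(p)\|_\infty\le\veps(p)\to0$ as $p\to\infty$, so $x$ is the desired frequently hypercyclic vector.
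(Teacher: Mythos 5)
Your overall plan is the right one---the paper's own proof is exactly the ``rephrase Theorem~\ref{THMCARACBILATERAL}'' argument you sketch, and your key structural observation is correct and well isolated: on the half-line a nonzero coordinate $(B_{\mathbf w}^n x)_s$ with $s>p$ forces $m>n$ (by (b) when $p\neq q$, and by $s>p\geq t$ when $p=q$), which is precisely why the unilateral condition (d) carries only the $m>n$ inequality. But there is one genuine gap, and it is exactly the point the theorem is designed to handle: you assume throughout that $\rho^{-1}\leq w_k\leq\rho$ for all $k$, i.e.\ that $\mathbf w$ is bounded \emph{below}. The unilateral theorem assumes only that $\mathbf w$ is bounded (the word ``integers'' in both statements is a slip for ``real numbers''; this is not pedantry, since the theorem is applied in Section~5 to a weight containing the ``very small'' factors $w^0_{a_r-r}$, hence emphatically not bounded below, and a proof valid only for weights bounded below would not cover that application). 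Your lower bound is used at every non-bookkeeping step: in the necessity part, the ratio in (b) is a product $w_{s+1}\cdots w_t$ (or its reciprocal) which you bound below by $\rho^{-(q+1)}$, and the analogous step in (d) needs $w_1\cdots w_t\geq\rho^{-q}$; in the sufficiency part, you prune by $w_1\cdots w_n\leq\rho^{4p}$ and then invoke \eqref{EQCARACFHC1}, which silently uses $w_1\cdots w_{n+s}\geq\rho^{-p}\,w_1\cdots w_n$ --- false in general here, since without a lower bound $w_1\cdots w_{n+s}$ can be arbitrarily smaller than $w_1\cdots w_n$.

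The paper repairs all of this without a global lower bound, and the repairs explain the two ways the unilateral statement is \emph{stronger} than the bilateral one, which your write-up does not exploit. First, the finitely many weights $w_1,\dots,w_q$ appearing in the ratios above have a positive minimum, and the paper absorbs it into the constants: it takes $\omega_p\geq 4\omega_{p-1}\rho^{p+1}\min\bigl(1,\inf(w_t^{p+1};\ t\in[0,p])\bigr)^{-1}$ and $M(p)\geq\rho^{4p}\min\bigl(1,\inf(w_t^{2p};\ t\in[0,p])\bigr)^{-1}$, so your growth condition $\omega_p>4\omega_{p-1}\rho^{p+1}$ is not fast enough in general. Second, the strengthened (c) --- divergence of $w_1\cdots w_n$ along $E_p+[0,p]$, not just along $E_p$ --- is what legitimizes pruning by $\min_{0\leq s\leq p}w_1\cdots w_{n+s}\leq\rho^{4p}$ and what makes $x\in c_0$ (your justification of (c) should likewise invoke the positive finite minimum of $w_1\cdots w_s$ over $s\in[0,p]$, not boundedness). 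Third, (d) is quantified over \emph{all} $t\in\{0,\dots,q\}$ precisely so that the estimate replacing \eqref{EQCARACFHC2} needs no lower bound: writing $|(B_{\mathbf w}^n x)_s|=\frac{w_1\cdots w_t}{w_1\cdots w_{m-n+t}}|y_q(t)|\leq\frac{\rho^{2q}}{M(p)M(q)}$ uses only the upper bound on $\mathbf w$ and (d) at the given $t$, whereas your ``exactly as in \eqref{EQCARACFHC2}'' imports a step ($t=0$ plus a lower bound to shift the index) that fails here. With these three adjustments your argument closes; as written, it proves the theorem only for weights bounded below.
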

\begin{proof}
The proof is more or less a rephrasing of the proof of Theorem \ref{THMCARACBILATERAL}. We have to take into account that $\mathbf w$ is not necessarily bounded below. This was used at several places:
\begin{itemize}
\item to prove that $x$ belongs to $c_0$; this remains true because we have a stronger assumption (c).
\item to obtain inequalities (\ref{EQCARACFHC3}),  (\ref{EQCARACFHC1}) and (\ref{EQCARACFHC2}). This is settled by the stronger assumption (d)
and by adjusting the values of $\omega_p$ and $M(p)$. For instance, we may choose
$$\omega_p\geq 4\omega_{p-1}\rho^{p+1}\times\frac1{\min(1,\inf (w_t^{p+1};\ t\in[0,p]))}$$
$$M(p)\geq \rho^{4p}\times\frac1{\min(1,\inf (w_t^{2p};\ t\in[0,p]))}.$$
\end{itemize}
The details are left to the reader.
\end{proof}

\section{A $\mathcal U$-frequently hypercyclic operator which is not frequently hypercyclic}\label{SECFHCC0}
We turn to the proof of Theorem \ref{THMFHCC0}. It requires careful constructions. We first build sequences of integers
with positive upper density and additional properties. These sequences allow us to define our weight $\mathbf w$. We then conclude
by showing that $B_{\mathbf w}$ is not frequently hypercyclic and by applying Theorem \ref{THMCARACUNILATERAL} to show that $B_{\mathbf w}$
is $\mathcal U$-frequently hypercyclic.
The rest of this section is devoted to these constructions. 

\subsection{The sequences of integers}
We shall construct sets of integers  $(E_p)_{p\geq 1}$ and sequences of integers $(a_r)_{r\geq 1}$, $(b_r)_{r\geq 1}$ satisfying the 
following properties:
\begin{description}
\item[(S1)] For any $r\geq 1$, $a_{r+1}\geq b_r+2r+1$, $b_r\geq ra_r$ and $b_r>r^2(2r+1)$;
\item[(S2)] For any $p\geq 1$, $\overline d\big(E_p\big)>0$;
\item[(S3)] For any $p\geq 1$, $E_p\subset {b_p}\mathbb N$;
\item[(S4)] For any $p,q\geq 1$ with $p\neq q$ and any $(n,m)\in E_p\times E_q$ with $m>n$, then  
\begin{eqnarray*}
m-n&>&p\\
m-n&\notin&\bigcup_{r\geq 1}[a_r-(r+1)-q;b_r+r+p+q];\\
n&\notin&\bigcup_{r\geq 1}[a_r-(r+1)-p;b_r+2r].
\end{eqnarray*}
\end{description}
The construction of these sequences is done by induction. Precisely, at Step $r$, we construct
integers $a_r$, $b_r$, $N_{p,r}$ for $p\leq r$ and subsets $E_p^r$ of ${b_p}\mathbb N$ for $p\leq r$ such that
\begin{itemize}
\item $a_r\geq b_{r-1}+2(r-1)+1$, $b_r\geq ra_r$ and $b_r>r^2(2r+1)$;
\item For any $p\leq r$, 
$$\displaystyle \#E_p^r(N_{p,r})\geq \frac1{2b_p}N_{p,r}.$$
\item For any $p,q\in\{1,\dots,r\}$ with $p\neq q$ and any $(n,m)\in E_p^r\times E_q^r$ with $m>n$, then  
\begin{eqnarray*}
m-n&>&p\\
m-n&\notin&\bigcup_{\rho=1}^r[a_{\rho}-({\rho}+1)-q;b_{\rho}+{\rho}+p+q];\\
n&\notin&\bigcup_{\rho=1}^r[a_{\rho}-({\rho}+1)-p;b_{\rho}+2{\rho}].
\end{eqnarray*}
\item For any $p<r$, $E_{p}^{r-1}\subset E_p^r$.
\end{itemize}
Provided this construction has been done, it is enough to set $E_p=\bigcup_{r\geq p}E_p^r$. The initialization of the induction is very easy.
One just sets for instance $a_1=1$, $b_1=4$, $N_{1,1}=8$ and $E_1^1=\{8\}$. 
Let us explain how to proceed with Step $r+1$ provided the construction has been done until Step $r$. Let $a_{r+1}$ be any integer
such that 
$$a_{r+1}\geq\left\{
\begin{array}{l}
b_r+2r+1\\
\max\big(n+p;\ p\leq r,\ n\in E_p^r\big)+(r+2).
\end{array}
\right.$$
Next we set $b_{r+1}=\max\big((r+1)a_{r+1},(r+1)^2 (2r+3)+1\big)$. In particular, it is clear that if $(n,m)\in E_p^r\times E_q^r$ with $m>n$ and $p\neq q\in\{1,\dots,r\}$, then
\begin{eqnarray*}
m-n&>&p\\
m-n&\notin&\bigcup_{\rho=1}^{r+1}[a_{\rho}-({\rho}+1)-q;b_{\rho}+{\rho}+p+q];\\
n&\notin&\bigcup_{\rho=1}^{r+1}[a_{\rho}-({\rho}+1)-p;b_{\rho}+2{\rho}].
\end{eqnarray*}
Let us now define $E_{1}^{r+1}$. We first set
$$M_{1,r+1}=b_{r+1}+3(r+1)+\max(E_p^r;\ p\leq r)$$
and we consider $N_{1,r+1}\geq M_{1,r+1}$ such that
$$\#\big([M_{1,r+1};N_{1,r+1}]\cap {b_1}\mathbb N)\geq \frac 1{2b_1}N_{1,r+1}.$$
We then set $E_{1}^{r+1}=E_{1}^{r}\cup ([M_{1,r+1};N_{1,r+1}]\cap {b_1}\mathbb N)$. 
In particular, if $m$ belongs to $E_{1}^{r+1}\backslash E_1^r$ and $n\in E_p^r$ with $m>n$ and $2\leq p\leq r$, then 
$$
\left\{
\begin{array}{rcl}
m-n&\geq& b_{r+1}+3(r+1)\geq b_{r+1}+(r+1)+p+1> p\\
m&\geq &b_{r+1}+3(r+1)>b_{r+1}+2(r+1).
\end{array}\right.
$$
The construction of the other sets $E_{p}^{r+1}$, $2\leq p\leq r+1$, follows exactly the same lines, defining first
$$M_{p,r+1}=b_{r+1}+3(r+1)+\max(E_{q}^m;\ 1\leq q\leq m\leq r\textrm{ or } 1\leq q\leq p-1, m=r+1).$$
The remaining details are left to the reader. We point out that, for $p\neq q$, 
$E_p+[0,p]$ does not intersect $E_q+[0,q]$. 

\subsection{The weight}\label{SUBSECWEIGHT}
We first define a weight $\mathbf w^0$ whose behaviour is adapted to the sequence $(a_r)$. Precisely, for $n\geq 1$, 
$w_n^0$ is defined by
\begin{itemize}
\item $w^0_n=2$ provided $n\notin \bigcup_{r\geq 1}[a_r-r;b_r+r]$;
\item $w^0_{a_r-r}$ is the (very small) positive real number such that $w_1^0\cdots w_{a_r-r}^0=1$;
\item $w_n^0=1$ otherwise.
\end{itemize}
The main interest of $\mathbf w^0$ is that the product $w_1^0\cdots w_n^0$ is rather large when $n$ belongs to a
difference set $E_p-E_q$, with $p\neq q$, or to a set $E_p$, whereas $w_1^0\cdots w_n^0=1$ if $n\in \bigcup_{r\geq 1}[a_r-r;b_r+r]$.
We then define, for each $p\geq 1$, a weight $\mathbf w^p$ which is suitable for the difference set $E_p-E_p$. Indeed, let
\begin{itemize}
\item $w_n^p=2$ provided $n={b_p}k+u$, with $k\geq 1$ and $u\in \{-(p-1),\dots,0\}$;
\item $w_n^p=\frac 1{2^{p}}$ provided $n={b_p}k+p+1$, $k\geq 1$;
\item $w_n^p=1$ otherwise.
\end{itemize}
This is not ambiguous since $b_p>2p+1$. Notice that
$w_1^p\cdots w_n^p=2^p$ provided $n\in {b_p}\mathbb N+[0,p]$ whereas $w_1^p\cdots w_n^p=1$ is equal to 1 outside 
$\bigcup_{k\geq 1}[{b_p}k-(p-1);{b_p}k+p]$. 

The weight $\mathbf w$ combines the properties of all $\mathbf w^p$. It is defined by setting by induction on $n\geq 1$
$$w_1\cdots w_n=\max(w_1^p\cdots w_n^p;\ p\geq 0).$$
$\mathbf w$ is well-defined. Indeed, let $n\geq 1$ and let $r\geq 1$ be such that $n\in [b_r,b_{r+1})$. 
Then $w_1^p\cdots w_n^p=1\leq w_1^0\cdots w_n^0$ provided $p\geq r+2$, so that 
${b_p}-p\geq b_{r+1}.$ Moreover $\mathbf w$ is bounded by $2$, since its definition easily implies that,
for any $n\geq 1$, $w_n\leq \max(w_n^p;\ p\geq 0)\leq 2$.

 We shall point out several important facts regarding $\mathbf w$ which come
from the properties of $\mathbf w^0$ and $\mathbf w^p$, $p\geq 1$.
The products $w_1\cdots w_n$ and $w_1\cdots w_{n-m}$ for $(n,m)\in E_p\times E_q$, $m>n$, are large.
Indeed, let $r$ be the unique integer such that $n\in [b_r+2r+1;a_{r+1}-(r+2)-p]$. Then, for any $s\in\{0,\dots,p\}$,
the definition of $\mathbf w$ ensures that
\begin{eqnarray}
w_1\cdots w_{n+s}&\geq& w_1^0\cdots w_{n+s}^0\nonumber\\
&\geq& w_1^0\cdots w_{b_r+r}^0w_{b_r+r+1}^0\cdots w_n^0w_{n+1}^0\cdots w_{n+s}^0\nonumber\\
\label{EQW1} &\geq &1\cdot 2^r\cdot 1.
\end{eqnarray}
Moreover, if $p\neq q$, there exists some $\rho\geq 1$ such that $m-n$ belongs
to $[b_\rho+\rho+p+q+1;a_{\rho+1}-(\rho+2)-q-1]$, so that, for any $t\in\{0,\dots,q\}$,
\begin{eqnarray}\label{EQW2}
w_1\cdots w_{m-n+t}\geq w_1^{0}\cdots w_{m-n+t}^0\geq 2^{p+q}.
\end{eqnarray}
If $p=q$, then 
\begin{eqnarray}\label{EQW3}
w_1\cdots w_{m-n+t}\geq w_1^{p}\cdots w_{m-n+t}^p\geq 2^{p}.
\end{eqnarray}
On the contrary, $w_1\cdots w_n$ is often small. Indeed, observe that for $p\geq 1$, 
$w_1^p\cdots w_n^p\leq 2^p$ for any $n>1$ and that $w_1^0\cdots w_n^0=1$ provided 
$n\in\bigcup_{r\geq 1} [a_r,b_r]$. Hence, if $n$ belongs to $\bigcup_{r\geq 1} [a_r,b_r]$ and satisfies $w_1\cdots w_n>2^p$, then
there exists $q>p$ such that $n\in {b_q}\mathbb N+[-q,q]$.

\subsection{$B_{\mathbf w}$ is not frequently hypercyclic}
Assume on the contrary that $B_\mathbf w$ is frequently hypercyclic. Then there exists $E\subset\mathbb N$
with $\underline{d}(E)>0$ such that $w_1\dots w_n\to +\infty$ when $n\to+\infty$, $n\in E$.
In particular, for any $p\geq 1$, 
$$F_p=\big\{n\in E;\ w_1\cdots w_n> 2^{p}\big\}$$
is a cofinite subset of $E$. It has the same lower density. Now, let $r\geq 1$ and let $n \in F_p\cap [0,b_r]$. Then
either $n\leq a_r$ or there exists $q>p$ such that $n$ belongs to  ${b_q}\mathbb N+[-q,q]$. This yields
$$\#F_p(b_r)\leq a_r+b_r\times \sum_{q>p}\frac{2q+1}{b_q}.$$
Since $a_r/b_r$ goes to zero, this implies
$$\underline{d}(E)=\underline{d}(F_p)\leq \sum_{q> p}\frac{1}{q^2}.$$
Since $p$ is arbitrary, $\underline{d}(E)=0$ and $B_{\mathbf w}$ cannot be frequently hypercyclic.

\subsection{$B_{\mathbf w}$ is $\mathcal U$-frequently hypercyclic}
This follows from an application of Theorem \ref{THMCARACUNILATERAL} for the sets $E_p$ defined above and from the work of Subsection \ref{SUBSECWEIGHT}. Indeed, condition (c)
of this theorem follows from (\ref{EQW1}) whereas condition (d) is a consequence of (\ref{EQW2}) and (\ref{EQW3}), setting $M(p)=2^{p/2}$.

\section{Frequent hypercyclicity vs distributional chaos}\label{SECFHCDC}
We turn to the proof of Theorem \ref{THMFHCDC}. We follow the same kind of proof.
\subsection{The sequences of integers}
For $a>1$, $\veps>0$ and $u\in\mathbb N$, we set
$$I_{u}^{a,\veps}=[(1-\veps)a^u,(1+\veps)a^u].$$
\begin{Lemma}
 There exist $a>1$ and $\veps>0$ such that $\bar d(\bigcup_{u\geq 1}I_{u}^{a,4\veps})<1$ and, for any $u>v\geq 1$,
$$I_u^{a,2\veps}\cap I_v^{a,2\veps}=\emptyset,\ I_{u}^{a,2\veps}-I_{v}^{a,2\veps}\subset I_u^{a,4\veps}.$$
\end{Lemma}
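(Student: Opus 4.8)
The plan is to exhibit explicit values of $a$ and $\veps$, rather than argue abstractly, since the three requirements are essentially competing scale conditions on the intervals $I_u^{a,\veps}=[(1-\veps)a^u,(1+\veps)a^u]$. The key observation is that all three conditions become easier as $a$ grows and $\veps$ shrinks, so I would first understand each condition as a constraint relating $a$ and $\veps$, and then simply pick $a$ large and $\veps$ small enough to satisfy all of them simultaneously.

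First I would handle the \emph{disjointness} condition $I_u^{a,2\veps}\cap I_v^{a,2\veps}=\emptyset$ for $u>v$. The worst case is $u=v+1$: disjointness requires the left endpoint of $I_{v+1}^{a,2\veps}$ to exceed the right endpoint of $I_v^{a,2\veps}$, i.e. $(1-2\veps)a^{v+1}>(1+2\veps)a^v$, which after dividing by $a^v$ reads $(1-2\veps)a>1+2\veps$. This is a single inequality $a>\tfrac{1+2\veps}{1-2\veps}$ independent of $u,v$; for any fixed $\veps<1/2$ it holds once $a$ is large enough (for instance $a\geq 2$ and $\veps\le 1/8$ already suffice). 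Next I would treat the \emph{difference} condition $I_u^{a,2\veps}-I_v^{a,2\veps}\subset I_u^{a,4\veps}$. An element of the difference lies between $(1-2\veps)a^u-(1+2\veps)a^v$ and $(1+2\veps)a^u-(1-2\veps)a^v$; since $v<u$, the correction term $a^v$ is at most $a^{u-1}$, so I would bound $(1+2\veps)a^v\le (1+2\veps)a^{u-1}$ and require $(1-2\veps)a^u-(1+2\veps)a^{u-1}\geq(1-4\veps)a^u$ together with the symmetric upper bound. Dividing by $a^u$, both reduce to inequalities of the shape $\tfrac{1+2\veps}{a}\leq 2\veps$, i.e. $a\geq\tfrac{1+2\veps}{2\veps}$, which again holds for $a$ large relative to $1/\veps$.

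Finally I would address the \emph{density} condition $\bar d\bigl(\bigcup_{u\geq1}I_u^{a,4\veps}\bigr)<1$. Here the natural approach is to estimate, over a window $[1,a^N]$, the proportion of integers covered by the intervals. The interval $I_u^{a,4\veps}$ has length $\approx 8\veps a^u$, and because the lengths grow geometrically the count of integers in $[1,a^N]$ covered by $\bigcup_u I_u^{a,4\veps}$ is dominated by the top few intervals; summing the geometric series gives a covered fraction of roughly $\tfrac{8\veps a^N}{a^N}\cdot\tfrac{1}{1-1/a}=\tfrac{8\veps a}{a-1}$, up to lower-order terms. Thus I would show $\bar d\le \tfrac{8\veps a}{a-1}$, which is strictly less than $1$ provided $\veps$ is small enough for the chosen $a$ (concretely $\veps<\tfrac{a-1}{8a}$). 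The main obstacle is reconciling the \emph{opposite} monotonicities: the disjointness and difference conditions push $a$ up relative to $1/\veps$, while the density condition caps $8\veps a/(a-1)$ below $1$. The resolution is that the density bound constrains the \emph{product} $\veps a$ (roughly $\veps a<1/8$) whereas the other two constrain $a$ from below and leave $\veps$ free to shrink; so I would fix a concrete $a$ (say $a=2$), then choose $\veps$ small enough—small enough both for $a\geq(1+2\veps)/(2\veps)$ and for $8\veps a/(a-1)<1$—and verify all three inequalities hold, completing the proof.
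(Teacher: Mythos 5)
Your derivations of the three individual constraints are correct and essentially identical to the paper's: disjointness needs $a>\frac{1+2\veps}{1-2\veps}$, the difference inclusion needs $(1-2\veps)a^u-(1+2\veps)a^{u-1}\geq(1-4\veps)a^u$, i.e.\ $a\geq\frac{1+2\veps}{2\veps}$ (equivalently the paper's $\frac{2\veps a}{1+2\veps}\geq 1$; the upper endpoint is in fact trivial, since $(1+2\veps)a^u-(1-2\veps)a^v\leq(1+2\veps)a^u$), and the geometric-series density bound $\bar d\leq\frac{8\veps a}{a-1}$ matches the paper's $\frac{8\veps a}{(1+4\veps)(a-1)}$ up to an immaterial factor. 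The genuine error is in your final reconciliation, and it traces back to your opening ``key observation'': it is \emph{false} that all three conditions become easier as $\veps$ shrinks. At fixed $a$, the difference condition $a\geq\frac{1+2\veps}{2\veps}$ becomes \emph{harder} as $\veps\to 0$ (the right-hand side blows up like $\frac{1}{2\veps}$): shrinking $\veps$ shrinks the target interval $I_u^{a,4\veps}$ faster than it shrinks the perturbation $(1+2\veps)a^{u-1}$. You also inverted which condition constrains what: the density condition constrains $\veps$ alone (roughly $\veps<\frac{1}{8}$, since $\frac{8\veps a}{a-1}<1$ iff $\veps<\frac{a-1}{8a}$), while it is the difference condition that constrains the product from below, $\veps a\geq\frac{1+2\veps}{2}>\frac12$.

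Consequently your concrete choice fails outright: with $a=2$ the difference condition forces $\veps\geq\frac12$, while density forces $\veps<\frac{1}{16}$ (and disjointness forces $\veps<\frac12$) --- no admissible $\veps$ exists. Feasibility of the system requires $\frac{1}{2(a-1)}\leq\veps<\frac{a-1}{8a}$, which demands $a>3+2\sqrt2$; the correct order of quantifiers is the reverse of yours: fix $\veps$ small first (say $\veps=\frac{1}{10}$), then take $a$ large relative to $\frac1\veps$ (say $a=10$: then $a\geq\frac{1+2\veps}{2\veps}=6$, $a>\frac{1+2\veps}{1-2\veps}=\frac32$, and $\frac{8\veps a}{a-1}=\frac89<1$). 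This is exactly how the paper's ``provided $\veps$ is small enough and $a$ is large enough'' must be read, with $a$ chosen depending on $\veps$. With that correction your argument goes through and coincides with the paper's proof.
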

\begin{proof}
 It is easy to check that, for any $u>v\geq 1$, $I_{u}^{a,2\veps}-I_v^{a,2\veps}\subset I_{u}^{a,4\veps}$ as soon as,
for any $u\geq 2$, 
$$(1-2\veps)a^u-(1+2\veps)a^{u-1}\geq (1-4\veps)a^u.$$
This condition is satisfied provided 
$$\frac{2\veps a}{1+2\veps}\geq 1.$$
Moreover, let us also assume that $(1+4\veps)/(1-4\veps)<a$. Then 
\begin{eqnarray*}
 \bar d\left(\bigcup_{u\geq 1}I_u^{a,4\veps}\right)&\leq&\lim_{k\to+\infty}\frac{8\veps(1+\dots+a^k)}{(1+4\veps)a^k}\\
&\leq&\frac{8\veps a}{(1+4\veps)(a-1)},
\end{eqnarray*}
and this is less than 1 provided $\veps$ is small enough and $a$ is large enough. Observe also that this choice of $a$
and $\veps$ guarantees that $I_u^{a,2\veps}\cap I_v^{a,2\veps}=\emptyset$ for any $u\neq v$.
\end{proof}
From now on, we fix $a>1$ and $\veps>0$ satisfying the conclusions of the previous lemma. We then consider an increasing sequence
of positive integers $(b_p)_{p\in\mathbb N}$ such that
$$\sum_{p\geq 1}\bar d(b_p\mathbb N+[-2p,2p])+\bar d\left(\bigcup_{u\geq 1}I_u^{a,4\veps}\right)<1.$$
Observe that $b_p\geq 4p$ for any $p\geq 1$. 

We also consider a partition of $\mathbb N$ into $\bigcup_{p\geq 1}A_p$ where each $A_p$ is syndetic. For instance,
we may set $A_p=2^{p-1}\mathbb N\backslash 2^p\mathbb N$. We finally set
$$E_p=\bigcup_{u\in A_p}I_u^{a,\veps}\cap b_p\mathbb N.$$
\begin{Lemma}
 For any $p\geq 1$, $\underline d(E_p)>0$.
\end{Lemma}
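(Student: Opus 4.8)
The plan is to exploit the syndeticity of $A_p$: for every large threshold $n$ one of the intervals $I_u^{a,\veps}$ with $u\in A_p$ lies entirely below $n$ while being only a bounded factor smaller than $n$, and the multiples of $b_p$ contained in that single interval already supply enough elements of $E_p$ to force positive lower density. I would first record the syndeticity constant: since $A_p$ is syndetic there is an integer $M_p$ such that every block of $M_p$ consecutive positive integers meets $A_p$.

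Given a large $n$, let $U=U(n)$ be the largest integer with $(1+\veps)a^U\leq n$. Then $I_U^{a,\veps}\subset[0,n]$, and by maximality $(1+\veps)a^{U+1}>n$, so $a^U>n/\bigl(a(1+\veps)\bigr)$. Choosing any $u\in A_p\cap[U-M_p+1,U]$ (nonempty once $U\geq M_p$), the interval $I_u^{a,\veps}$ still lies in $[0,n]$ and, because $a>1$,
\[
a^u\geq a^{U-M_p+1}>\frac{a^{-M_p}}{1+\veps}\,n .
\]

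Next I would count. The interval $I_u^{a,\veps}$ has length $2\veps a^u$, hence it contains at least $2\veps a^u/b_p-1$ multiples of $b_p$; since $u\in A_p$, every such multiple belongs to $E_p$ and is $\leq n$. Combining with the lower bound on $a^u$ gives, for all $n$ with $U(n)\geq M_p$,
\[
\frac{\#E_p(n)}{n}\;\geq\;\frac{2\veps a^{-M_p}}{b_p(1+\veps)}-\frac1n ,
\]
a bound uniform in $n$. Letting $n\to+\infty$ yields $\underline d(E_p)\geq \dfrac{2\veps a^{-M_p}}{b_p(1+\veps)}>0$, which is the claim.

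The argument is essentially routine; the only point demanding care is the bookkeeping of the bounded-gap step, namely arranging that the chosen index $u$ simultaneously lies in $A_p$ and keeps $I_u^{a,\veps}$ below $n$, at the cost of only the bounded factor $a^{M_p}$ in the size of $a^u$. This loss is harmless since it is absorbed into the final constant, and no property of $A_p$ beyond syndeticity is needed.
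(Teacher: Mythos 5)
Your proof is correct and takes essentially the same route as the paper's: both use the bounded gaps of $A_p$ to find, below any large threshold, an interval $I_u^{a,\veps}$ with $u\in A_p$ whose length is at least a fixed fraction (of order $a^{-M_p}$) of the threshold, and then count the multiples of $b_p$ it contains, arriving at a lower density bound of order $2\veps/(b_p a^{M_p})$. The paper merely compresses this bookkeeping into a liminf along the increasing enumeration $(n_k)$ of $A_p$, while you carry out the same estimate for an arbitrary $n$.
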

\begin{proof}
 Let $(n_k)$ be an increasing enumeration of $A_p$ and let $M>0$ be such that $n_{k+1}-n_k\leq M$. Then
\begin{eqnarray*}
 \underline{d}(E_p)&\geq&\liminf_{k\to+\infty}\frac{\#E_p\big((1+\veps)a^{n_k}\big)}{a^{n_{k+1}}}\\
&\geq&\liminf_{k\to+\infty}\frac{2\veps a^{n_k}}{b_p a^{n_k+M}}>0.
\end{eqnarray*}
\end{proof}
Deleting a finite number of elements in $A_p$ if necessary, we may and shall assume that for any $u\in A_p$, 
$I_u^{a,\veps}+[-2p,2p]\subset I_u^{a,2\veps}$. Since $I_u^{a,2\veps}\cap I_v^{a,2\veps}=\emptyset$ whenever $u\neq v$ and since
$b_p\geq 4p$, we get the following lemma.
\begin{Lemma}
 Let $p,q\geq 1$, $n\in E_p$, $m\in E_q$ with $n\neq m$. Then $|n-m|>2\max(p,q)$.
\end{Lemma}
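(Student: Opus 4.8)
The plan is to trace each point back to the interval $I_u^{a,\veps}$ that contains it and to exploit the disjointness of the doubled intervals. First I would record that, since the intervals $I_u^{a,2\veps}$ are pairwise disjoint for distinct indices (and hence so are the smaller $I_u^{a,\veps}$), every $n\in E_p$ lies in $I_{u}^{a,\veps}$ for a \emph{unique} index $u=u(n)\in A_p$, and likewise every $m\in E_q$ determines a unique $v=v(m)\in A_q$.

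I would then split into two cases according to whether these indices coincide. If $u(n)=v(m)$, then because $(A_p)_{p\geq 1}$ is a partition of $\mathbb N$ the common index belongs to a single part, which forces $p=q$. In that case $n$ and $m$ are both multiples of $b_p$, so being distinct they satisfy $|n-m|\geq b_p\geq 4p>2\max(p,q)$, using the bound $b_p\geq 4p$ already noted.

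The remaining case $u(n)\neq v(m)$ is the one where the disjointness of the doubled intervals does the work. Suppose for contradiction that $|n-m|\leq 2\max(p,q)$; since the statement is symmetric in $(p,n)$ and $(q,m)$, I may assume $\max(p,q)=p$. Then $m-n\in[-2p,2p]$, so from $n\in I_{u(n)}^{a,\veps}$ with $u(n)\in A_p$ and the arranged inclusion $I_{u(n)}^{a,\veps}+[-2p,2p]\subset I_{u(n)}^{a,2\veps}$ I would deduce $m\in I_{u(n)}^{a,2\veps}$. But also $m\in I_{v(m)}^{a,\veps}\subset I_{v(m)}^{a,2\veps}$, so $m$ lies in $I_{u(n)}^{a,2\veps}\cap I_{v(m)}^{a,2\veps}$, which is empty because $u(n)\neq v(m)$. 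This contradiction gives $|n-m|>2\max(p,q)$, completing the argument.

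No step here is a genuine obstacle; the proof is simply an assembly of the three preparatory facts secured in the lines preceding the statement, namely the disjointness of the $I_u^{a,2\veps}$, the bound $b_p\geq 4p$, and the inclusion $I_u^{a,\veps}+[-2p,2p]\subset I_u^{a,2\veps}$ for $u\in A_p$. The only point requiring a little care is the reduction to $\max(p,q)=p$, which is legitimate precisely because of the symmetry of the conclusion under interchanging the two points.
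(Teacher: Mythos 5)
Your proof is correct and follows exactly the paper's intended argument: the paper proves this lemma in one line by citing the same three facts you use (the inclusion $I_u^{a,\veps}+[-2p,2p]\subset I_u^{a,2\veps}$ for $u\in A_p$, the pairwise disjointness of the $I_u^{a,2\veps}$, and $b_p\geq 4p$), and your case split according to whether the two points lie in the same interval simply makes explicit the details the paper leaves to the reader.
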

In particular, $(E_p+[-p,p])\cap (E_q+[-q,q])=\emptyset$ if $p\neq q$.

\subsection{The weight}
As in Section \ref{SUBSECWEIGHT}, we will define several weights: weights $\mathbf w^p$ such that $w_{m-n+1}^p\cdots w_0^p$ is small
when $m<n$ belong to the same $E_p$, and weights $\mathbf w^{u,v}$, $u>v$, such that $w_{m-n+1}^{u,v}\cdots w_{0}^{u,v}$
is small when $m$ belongs to $I_{v}^{a,\veps}$ and $n$ belongs to $I_u^{a,\veps}$. Elsewhere, they will be large to ensure
that $B_{\mathbf w}$ cannot be distributionally chaotic.

We begin with $\mathbf w^p$, $p\geq 1$. We set $\mathbf w^p=(w_k^p)$ any sequence of positive integers such that
$$w_{-k+1}^p\cdots w_0^p=\left\{
\begin{array}{ll}
 1&\textrm{ provided }k\notin b_p\mathbb N+[-2p,2p]\\
\frac 1{2^p}&\textrm{ provided }k\in b_p\mathbb N,
\end{array}
\right.$$
\begin{eqnarray*}
 &\frac 12\leq w_k^p\leq 2&\textrm{for any }k\in\mathbb Z\\
&w_k^p=2&\textrm{for any }k\geq 1.
\end{eqnarray*}
Let us now define $\mathbf w^{u,v}$ for $u>v$. Let $p,q\geq 1$ such that $u\in A_p$ and $v\in A_q$. We set 
$\mathbf w^{u,v}=(w_k^{u,v})$ any sequence of positive real numbers such that
$$w_{-k+1}^{u,v}\cdots w_0^{u,v}=\left\{
\begin{array}{ll}
 1&\textrm{ provided }k\notin I_u^{a,4\veps}\\
\min\left(\frac 1{2^{2p}},\frac 1{2^{2q}}\right)&\textrm{ provided }k\in I_u^{a,\veps}-I_v^{a,\veps},
\end{array}
\right.$$
\begin{eqnarray*}
 &\frac 12\leq w_k^{u,v}\leq 2&\textrm{for any }k\in\mathbb Z\\
&w_k^{u,v}=2&\textrm{for any }k\geq 1.
\end{eqnarray*}
It is possible to construct such a weight because
$$I_u^{a,\veps}-I_v^{a,\veps}+[-2\max(p,q),2\max(p,q)]\subset I_u^{a,2\veps}-I_v^{a,2\veps}\subset I_u^{a,4\veps}.$$
We finally define our weight $\mathbf w$ by setting inductively $w_{-n}$ for $n>0$ with the relation
$$w_{-n+1}\cdots w_0=\min_{p,u,v}(w_{-n+1}^p \cdots w_0^p,w_{-n+1}^{u,v}\cdots w_0^{u,v})$$
and by letting $w_k=2$ for $k\geq 1$. $\mathbf w$ is well-defined because, for a fixed $n\geq 1$, $w_{-n+1}^p\cdots w_0^p=1$ and
$w_{-n+1}^{u,v}\cdots w_{0}^{u,v}=1$ provided $p$ and $u$ are large enough. Moreover, the definition of $\mathbf w$
easily implies that $\frac12\leq w_k\leq 2$ for any $k\in\mathbb Z$,
so that the weighted shift $B_{\mathbf w}$ is bounded and invertible on $c_0(\mathbb Z)$.

\subsection{$B_{\mathbf w}$ is not distributionally chaotic}

We verify that the product $w_{-n+1}\cdots w_0$ is not small very often. 
Indeed, let $A=\mathbb N\backslash \left(\bigcup_p (b_p\mathbb N+[-2p,2p])\cup\bigcup_u I_u^{a,4\veps}\right)$.
Then our choices of $a,\veps$ and $(b_p)$ tell us that $\underline d(A)>0$. Moreover, by the construction of our weight,
$w_{-n+1}\cdots w_0=1$ provided $n\in A$. Pick now $x\in c_0(\mathbb Z)$, $x\neq 0$ and let $k$ be such that 
$x_k\neq 0$. If $n-k$ belongs to $A$, then $\|B_{\mathbf w}^{n-k}x\|_\infty\geq |x_k|>0$ so that $x$ cannot be a distributional
irregular vector for $B_{\mathbf w}$. Therefore, $B_{\mathbf w}$ is not distributionally chaotic.

\subsection{$B_{\mathbf w}$ is frequently hypercyclic}
We apply Theorem \ref{THMCARACBILATERAL}. The only thing that we do not have verified yet is property (d).
Thus, let $n\in E_p$, $m\in E_q$ with $m<n$. If $p=q$, then $m-n\in b_p\mathbb N$ so that
$$w_{m-n+1}\cdots w_0\leq\frac{1}{2^{2p}}.$$
If $p\neq q$, then there exists $u>v$ such that $n\in I_u^{a,\veps}$ and $m\in I_v^{a,\veps}$. Thus,
$$w_{m-n+1}\cdots w_0\leq\min\left(\frac1{2^{2p}},\frac{1}{2^{2q}}\right)\leq\frac1{2^{p+q}}.$$
Hence,
\begin{eqnarray*}\label{EQDC1}
 w_{m-n+1}\cdots w_0\leq\frac 1{M(p)M(q)}
\end{eqnarray*}
with $M(p)=2^p$. If $m>n$, then we just observe that $m-n\geq p+q$ to conclude
$$w_1\cdots w_{m-n}\geq 2^{p+q}=M(p)M(q).$$

\section{Final comments and open questions}\label{SECOPEN}
The work of Section \ref{SECFHCDC} shows that a frequently hypercyclic operator does not need to be
distributionally chaotic. However, it admits plenty of half distributional irregular vectors!
\begin{Prop}\label{PROPFHCDC}
Let $T\in\mathfrak L(X)$ be frequently hypercyclic. Then there exists a residual subset $\mathcal R$ of $X$ such that
any vector $y\in\mathcal R$ has a distributional unbounded orbit, namely there exists $B\subset\mathbb N$ such that
$\overline d(B)=1$ and $\lim_{n\to+\infty,n\in B}\|T^n y\|=+\infty.$
\end{Prop}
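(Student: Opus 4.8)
The plan is to recast the conclusion as a density statement about the orbit norms, exhibit the desired set as a $G_\delta$, and then establish its density through Baire's theorem, frequent hypercyclicity entering only at the very last step.

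First I would record the elementary reformulation that a vector $y$ has a distributionally unbounded orbit if and only if $\bar d\big(\{n\in\setN;\ \|T^n y\|>M\}\big)=1$ for every integer $M\ge1$. The direct implication is clear, since along the set $B$ the norms eventually exceed $M$, so $B$ is, up to finitely many terms, contained in $\{n;\ \|T^ny\|>M\}$. For the converse one builds $B$ by a diagonal argument: choosing inductively $N_1<N_2<\cdots$ so that the proportion of $n<N_k$ with $\|T^ny\|>k$ exceeds $1-1/k$ and so that $N_k/N_{k+1}\to0$, and setting $B=\bigcup_k\big(\{n;\ \|T^ny\|>k\}\cap[N_k,N_{k+1})\big)$, one obtains $\bar d(B)=1$ while $\|T^ny\|\to+\infty$ along $B$.

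Next I would write the target set as
\[
\mathcal R=\bigcap_{M\ge1}\bigcap_{k\ge1}\bigcap_{N_0\ge1} W_{M,k,N_0},\qquad
W_{M,k,N_0}=\bigcup_{N\ge N_0}\Big\{y;\ \#\{n<N;\ \|T^ny\|>M\}>(1-\tfrac1k)N\Big\}.
\]
Since $\{y;\ \|T^ny\|>M\}$ is open, for each fixed $N$ the requirement that more than $(1-1/k)N$ of the indices $n<N$ satisfy the strict inequality is a finite union of finite intersections of open sets, hence open; so every $W_{M,k,N_0}$ is open and $\mathcal R$ is a $G_\delta$. A vector lies in $\mathcal R$ exactly when $\limsup_N \#\{n<N;\ \|T^ny\|>M\}/N=1$ for every $M$, i.e. when it satisfies the reformulation above, so $\mathcal R$ is precisely the set of vectors with distributionally unbounded orbit. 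By Baire's theorem it therefore suffices to prove that each $W_{M,k,N_0}$ is dense.

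This density is the heart of the matter, and it is where frequent hypercyclicity is used. Fixing $y_0$, $\veps>0$ and $M,k,N_0$, and taking $x\in FHC(T)$, the idea is to produce a perturbation $y=y_0+z$ with $\|z\|<\veps$ together with one long window $[0,N)$, $N\ge N_0$, on which $\|T^ny\|>M+\max_{n<N}\|T^ny_0\|$ for more than a $(1-1/k)$-proportion of the indices. Here the defining property of $x$ would be exploited twice: for every threshold $R$ the open set $\{\|u\|>R\}$ is visited with positive lower density, so the ``large norm'' times $\{n;\ \|T^nx\|>R\}$ are abundant for arbitrarily large $R$; and the orbit of $x$ returns near $0$, which keeps the perturbation small. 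Superposing suitably time-shifted and rescaled pieces of the orbit of $x$, organized along a rapidly separated hierarchy of magnitudes so that on the window a single term dominates, one forces the prescribed proportion of large norms. Once every $W_{M,k,N_0}$ is dense, $\mathcal R$ is a dense $G_\delta$, hence residual, and each $y\in\mathcal R$ has a distributionally unbounded orbit.

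The main obstacle is exactly this last density step. The difficulty is that a single shifted and rescaled copy of the orbit of $x$ only makes the norm large on a set whose proportion in a window is bounded away from $1$ by the lower density of the returns; pushing the proportion all the way up to $1-1/k$ forces one to combine many scales at once while simultaneously controlling the cancellations among the superposed pieces and the total size of the perturbation. This quantitative balancing is the delicate part of the argument, and it is precisely where the full strength of frequent hypercyclicity, rather than mere hypercyclicity, is required.
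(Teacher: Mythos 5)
Your topological framework is sound: the reformulation of distributional unboundedness as $\bar d(\{n;\ \|T^ny\|>M\})=1$ for every $M$, the diagonal extraction of $B$, the openness of the sets $W_{M,k,N_0}$, and the identification of $\mathcal R=\bigcap W_{M,k,N_0}$ with the set of vectors having a distributionally unbounded orbit are all correct. But the proof has a genuine gap exactly where you flag one: the density of $W_{M,k,N_0}$ is asserted, not proved, and the sketch you give cannot work as described. Frequent hypercyclicity of $x$ only yields large norms along a set of lower density $\eta$, which is in general small; hence a window on which ``a single term dominates'' can produce a proportion of order $\eta$ of large times, never the required $1-\frac1k$. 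To fill a window up to proportion $1-\frac1k$, different superposed pieces would have to dominate at different times of the \emph{same} window, contradicting single-term domination; moreover, for a perturbation $z=\sum_i c_iT^{p_i}x$ you have no upper control on the cross terms $c_{i'}\|T^{n+p_{i'}}x\|$ (the orbit of a frequently hypercyclic vector is unbounded on a set of positive density), so the lower bound $\|T^n(y_0+z)\|\geq c_i\|T^{n+p_i}x\|-\cdots$ is not available. In short, the ``quantitative balancing'' you defer is the whole theorem, and no argument is offered for it.

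The paper sidesteps this amplification problem entirely by invoking a criterion from \cite{BBMP13}: to obtain a residual set of vectors with distributionally unbounded orbit it suffices to exhibit $\veps>0$, vectors $y_k\to0$ and integers $N_k$ with $\#\{1\leq j\leq N_k;\ \|T^jy_k\|>\veps\}\geq\veps N_k$ --- note a \emph{fixed, possibly small} proportion $\veps$, not a proportion tending to $1$. This is exactly what frequent hypercyclicity gives for free: choosing $\eta>0$ with $\underline d(\{n;\ \|T^nx\|>1\})>\eta$ and $p_k$ with $\|T^{p_k}x\|<1/k$ (density of the orbit), the paper sets $y_k=T^{p_k}x$; since $\{n;\ \|T^ny_k\|>1\}$ is a translate of $\{n;\ \|T^nx\|>1\}$, its proportion in $[1,N_k]$ exceeds $\eta/2$ once $N_k$ is large. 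The passage from this small-proportion hypothesis to the density of your sets $W_{M,k,N_0}$ (equivalently, to residuality) is precisely the nontrivial content of the cited result of \cite{BBMP13}: if you do not invoke it, you must reprove it, and your proposal does not. Either quote that criterion and conclude in a few lines as the paper does, or supply the missing amplification argument.
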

\begin{proof}
By the work of \cite{BBMP13}, it is sufficient to find $\veps>0$, a sequence $(y_k)\subset X$ and an increasing sequence
$(N_k)$ in $\mathbb N$ such that $\lim_k y_k=0$ and 
$$\#\big\{1\leq j\leq N_k;\ \|T^j y_k\|>\veps\big\}\geq \veps N_k.$$
Let $x\in FHC(T)$ and let $\eta>0$ be such that
$$\underline d\big(\big\{n\in\mathbb N; \|T^n x\|>1\big\}\big)>\eta.$$
We set $\veps=\eta/2$. For any $k\geq 1$, let $p_k>0$ be such that $\|T^{p_k}x\|<1/k$. We set
$y_k=T^{p_k}x$. This $p_k$ being fixed, we may find $N_k$ as large as we want such that
$$\#\big\{1\leq n\leq N_k;\ \|T^n y_k\|>1\big\}=\#\big\{p_k+1\leq n\leq N_k+p_k;\ \|T^n x\|>1\big\}\geq \frac{\eta N_k}2.$$
\end{proof}
This proposition has several interesting corollaries. First of all, a frequently hypercyclic operator is "almost" distributionally chaotic.
\begin{Cor}\label{CORFHCDC}
Let $T\in\mathfrak L(X)$ be frequently hypercyclic and assume that there exists a dense set $X_0\subset X$ such that
$T^n x\to 0$ for any $x\in X$. Then $T$ is distributionally chaotic.
\end{Cor}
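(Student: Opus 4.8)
The plan is to produce a distributional irregular vector and then quote the characterization of \cite{BBMP13}, according to which $T$ is distributionally chaotic precisely when such a vector exists. Concretely, I must find $y$ together with sets $A,B\subset\mathbb N$ of upper density $1$ along which $T^ny\to 0$ and $\|T^ny\|\to+\infty$, respectively. The unbounded half is already available: Proposition \ref{PROPFHCDC} furnishes a residual set $\mathcal R\subset X$ whose members all admit a set $B$ with $\bar d(B)=1$ and $\lim_{n\in B}\|T^ny\|=+\infty$. It therefore suffices to prove that the set $\mathcal S$ of vectors admitting a distributional orbit to $0$ is also residual; for then $\mathcal S\cap\mathcal R$ is nonempty by Baire's theorem, and any element of it is distributionally irregular.

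To control $\mathcal S$ I would first recast its defining condition in a pointwise form, replacing ``there is a single $A$ with $\bar d(A)=1$ and $\lim_{n\in A}T^ny=0$'' by the condition $(\star)$: for every $\delta>0$, $\bar d(\{n;\ \|T^ny\|<\delta\})=1$. The implication $\mathcal S\subset\{(\star)\}$ is immediate, since if $A$ works then $\{n;\ \|T^ny\|<\delta\}$ contains $A$ up to a finite set. For the reverse implication I would diagonalize: writing $S_j=\{n;\ \|T^ny\|<1/j\}$, so that $S_1\supseteq S_2\supseteq\cdots$ each have upper density $1$, I set $N_0=0$ and inductively choose $N_k>kN_{k-1}$ with $\#S_k(N_k)>(1-1/k)N_k$, then put $A=\bigcup_k\big(S_k\cap(N_{k-1},N_k]\big)$. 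For each fixed $j$ the difference $A\setminus S_j$ lies in $(0,N_{j-1}]$ and is finite, which gives $\lim_{n\in A}T^ny=0$; and the estimate $\#A(N_k)/N_k>1-2/k$ forces $\bar d(A)=1$. Hence $\mathcal S=\{(\star)\}$.

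It remains to show $\{(\star)\}$ is residual, which splits into two clean facts. First it is a $G_\delta$: for fixed $j,i,N$ the set $\{y;\ \#\{n\le N;\ \|T^ny\|<1/j\}>(1-1/i)N\}$ is open, being a finite union, over the large subsets $S\subset\{1,\dots,N\}$, of the finite intersections $\bigcap_{n\in S}\{y;\ \|T^ny\|<1/j\}$ of open conditions; and $(\star)$ is the countable intersection over $j$ and $i$ of the sets $\bigcap_M\bigcup_{N\ge M}$ of these. Second it is dense, because it contains the dense set $X_0$: if $T^nx\to 0$ then each $\{n;\ \|T^nx\|\ge 1/j\}$ is finite, so $\bar d(\{n;\ \|T^nx\|<1/j\})=1$. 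Thus $\mathcal S$ is a dense $G_\delta$, hence residual, and the argument concludes as above.

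The main obstacle is the analysis of $\mathcal S$, and within it the two technical points isolated above: that the ``single density-$1$ set'' formulation is equivalent to the pointwise condition $(\star)$ via the diagonalization, and that $(\star)$ genuinely defines a $G_\delta$. Once these are established, the hypothesis on $X_0$ supplies density for free and Baire's theorem delivers the common vector; everything else is bookkeeping against the definition of distributional irregularity.
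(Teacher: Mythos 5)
Your proof is correct, and it takes a genuinely different route from the paper's. The paper disposes of the corollary in two sentences by citing a further criterion from \cite{BBMP13} --- an operator having a distributionally unbounded orbit together with a dense set of vectors whose orbits tend to $0$ is distributionally chaotic --- and combining it with Proposition \ref{PROPFHCDC}. You instead rederive that criterion: you show that the set $\mathcal S$ of vectors admitting a set $A$ with $\bar d(A)=1$ and $T^ny\to 0$ along $A$ is a dense $G_\delta$ (dense because it contains $X_0$; $G_\delta$ via your pointwise reformulation $(\star)$ and the open sets $\{y;\ \#\{n\le N;\ \|T^ny\|<1/j\}>(1-1/i)N\}$), and your diagonalization $A=\bigcup_k\bigl(S_k\cap(N_{k-1},N_k]\bigr)$ establishing $\mathcal S=\{(\star)\}$ is sound: $A\setminus S_j\subset(0,N_{j-1}]$ because the $S_k$ are nested, and $\#A(N_k)>(1-2/k)N_k$ thanks to your requirement $N_k>kN_{k-1}$. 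Intersecting $\mathcal S$ with the residual set $\mathcal R$ of Proposition \ref{PROPFHCDC} via Baire's theorem then produces an explicit distributionally irregular vector, so you only need the characterization of distributional chaos by irregular vectors, which the paper already quotes in its introduction. Your route is longer but more self-contained --- it leans on a single black box from \cite{BBMP13} instead of two --- and it proves slightly more, namely that the set of distributionally irregular vectors of such a $T$ is residual rather than merely nonempty; the paper's route buys brevity. A minor remark: the hypothesis in the statement should read $T^nx\to 0$ for every $x\in X_0$ (the printed $x\in X$ is a typo), and this is indeed how both you and the paper use it.
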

\begin{proof}
By \cite{BBMP13}, an operator with a distributional unbounded orbit and such that $T^n x\to 0$ for any $x$ in a dense subset $X_0$ of $X$
is distributionally chaotic. And we have just proved that a frequently hypercyclic operator has a distributional unbounded orbit.
\end{proof}

Our example of a frequently hypercyclic operator which is not distributionally chaotic was a bilateral weighted shift.
This would be impossible with a unilateral weighted shift.
\begin{Cor}
A frequently hypercyclic unilateral weighted shift is distributionally chaotic.
\end{Cor}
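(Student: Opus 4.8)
The plan is to prove the contrapositive is unnecessary; instead I would directly show that any frequently hypercyclic unilateral weighted shift on a space like $c_0(\setN)$ or $\ell^p(\setN)$ automatically admits a distributional irregular vector, whence it is distributionally chaotic by the result of \cite{BBMP13}. The key structural fact about \emph{unilateral} backward weighted shifts is that the forward orbit tends to zero on a large set of times whenever the partial products $w_1\cdots w_n$ grow: for a finitely supported vector $y$ supported in $[0,N]$, the coordinate expression $(B_{\mathbf w}^n y)_k = w_{k+1}\cdots w_{k+n}\, y_{k+n}$ vanishes identically as soon as $n>N$, so in fact \emph{every} finitely supported vector satisfies $B_{\mathbf w}^n y = 0$ for all large $n$. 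This is the decisive asymmetry with the bilateral case used in Section~\ref{SECFHCDC}: on the unilateral shift there is a dense set $\mathcal D$ (the finitely supported sequences) on which $B_{\mathbf w}^n x \to 0$, indeed is eventually $0$.

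With that observation in hand, the argument is immediate. First I would invoke Proposition~\ref{PROPFHCDC}: since $B_{\mathbf w}$ is frequently hypercyclic, there is a residual set $\mathcal R$ of vectors $y$ with a distributionally unbounded orbit, i.e. $\overline d(B)=1$ with $\|B_{\mathbf w}^n y\|\to+\infty$ along $B$. Thus $B_{\mathbf w}$ possesses a distributionally unbounded orbit. Second, I would note that the finitely supported sequences $\mathcal D$ form a dense subspace on which $B_{\mathbf w}^n x \to 0$ (as just explained). These are exactly the two hypotheses of Corollary~\ref{CORFHCDC}. Applying that corollary concludes that $B_{\mathbf w}$ is distributionally chaotic.

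I expect the only point requiring any care is verifying that the forward orbit genuinely tends to $0$ on a dense set in the relevant topology. For $c_0(\setN)$ and $\ell^p(\setN)$ this is transparent because $B_{\mathbf w}^n y$ is eventually the zero vector for finitely supported $y$, so the convergence holds in norm without any hypothesis on the size of the weights; boundedness of $\mathbf w$ is not even needed for this half. The substantive input is packaged entirely in Proposition~\ref{PROPFHCDC} and Corollary~\ref{CORFHCDC}, so the proof reduces to checking that the density condition $T^n x\to 0$ on a dense set is met, which for unilateral shifts is free. Hence the whole statement follows in two lines once those two earlier results are cited, and no delicate number-theoretic construction of the kind used in Sections~\ref{SECFHCC0}--\ref{SECFHCDC} is required.
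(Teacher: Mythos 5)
Your proposal is correct and follows essentially the same route as the paper, whose entire proof is the observation that finitely supported vectors have orbits (eventually zero, hence) tending to zero, followed by an application of Corollary~\ref{CORFHCDC}; your explicit citation of Proposition~\ref{PROPFHCDC} is already subsumed in that corollary. The coordinate computation $(B_{\mathbf w}^n y)_k = w_{k+1}\cdots w_{k+n}\,y_{k+n}$ and the resulting eventual vanishing for $n$ beyond the support are exactly the justification the paper leaves implicit.
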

\begin{proof}
The orbit of any vector with a finite support goes to zero, so that we may apply Corollary \ref{CORFHCDC}.
\end{proof}
Thanks to Proposition \ref{PROPFHCDC}, we can solve another open question of \cite{BBMP13}.
\begin{Def}
Let $T\in\mathfrak L(X)$. We say that the $\mathbb T$-eigenvectors of $T$ are \emph{perfectly spanning} if, for any countable set
$D\subset \mathbb T=\{z\in\mathbb C;\ |z|=1\}$, the linear span of $\bigcup_{\lambda\in\mathbb T\setminus D}\ker (T-\lambda)$ is dense in $X$.
\end{Def}
The next corollary extends a result of \cite{BBMP13} from Hilbert spaces to general Banach spaces.
\begin{Cor}
Let $T\in\mathfrak L(X)$ be  such that its $\mathbb T$-eigenvectors are perfectly spanning. Then $T$ is distributionally chaotic.
\end{Cor}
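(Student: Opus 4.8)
The plan is to reduce everything to the production of a single distributional irregular vector, since by \cite{BBMP13} this is equivalent to distributional chaos. I would obtain such a vector by intersecting two residual sets: the set of vectors with a distributionally unbounded orbit and the set of distributionally null vectors. The whole point is that the perfectly spanning hypothesis furnishes the second set, while the genuinely new input, Proposition \ref{PROPFHCDC}, furnishes the first; a Baire argument then produces a vector lying in both.

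First I would use that the perfectly spanning assumption is precisely the condition under which $T$ carries an ergodic invariant Gaussian measure with full support. By the pointwise ergodic theorem, almost every vector then visits each nonempty open set with a frequency equal to its (positive) measure, so $T$ is frequently hypercyclic; this is the Gaussian route of \cite{BAYGRITAMS}, and it is the only place the eigenvector hypothesis is used to generate global dynamics. Having $T$ frequently hypercyclic, I would invoke Proposition \ref{PROPFHCDC} to get that the set of vectors $y$ admitting $B\subset\mathbb N$ with $\bar d(B)=1$ and $\|T^ny\|\to+\infty$ along $B$ is residual. This distributionally unbounded ingredient is exactly what was available only on Hilbert spaces in \cite{BBMP13}, where it was read off from the second-order ($L^2$) structure of the Gaussian measure; Proposition \ref{PROPFHCDC} now supplies it on an arbitrary Banach space, which is what removes the Hilbert restriction.

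It remains to produce, on a residual set, distributionally null vectors, i.e. vectors $z$ for which some $A$ with $\bar d(A)=1$ satisfies $T^nz\to 0$ along $A$. This is the delicate part and is where the perfectly spanning hypothesis is used a second time. A single eigenvector is useless, its orbit having constant norm because the eigenvalue lies in $\mathbb T$; instead one must superpose many eigenvectors attached to distinct unimodular eigenvalues and exploit the fact that, along a set of times of upper density one, the phases $\lambda^n$ can be forced to nearly cancel. This is a gliding-hump construction that uses no inner product, so it goes through on $X$ as in \cite{BBMP13}; carrying it out densely and passing to the associated $G_\delta$ yields a residual set of distributionally null vectors. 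Intersecting this residual set with the one coming from Proposition \ref{PROPFHCDC} gives, by Baire, a vector that is simultaneously distributionally unbounded and distributionally null, hence distributionally irregular, and \cite{BBMP13} then yields that $T$ is distributionally chaotic.

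I expect the main obstacle to be this null-vector construction: one must arrange the phase cancellations guaranteed by the perfectly spanning condition to occur on a set of times of upper density one \emph{simultaneously} with closeness to a prescribed vector, and with enough uniformity that the resulting $G_\delta$ is genuinely dense and survives the Baire intersection with the unbounded-orbit set. By contrast, the distributionally unbounded side, which was the sticking point in the Banach-space setting, is now immediate from Proposition \ref{PROPFHCDC}; note also that, unlike in Corollary \ref{CORFHCDC}, one cannot simply use vectors whose orbit tends to $0$, since eigenvectors do not decay, so the null ingredient must genuinely be built by cancellation.
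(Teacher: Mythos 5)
Your overall architecture --- distributionally unbounded orbits from Proposition \ref{PROPFHCDC}, a ``null'' ingredient from the eigenvectors, then the criterion of \cite{BBMP13} --- is the same as the paper's, and you correctly identify Proposition \ref{PROPFHCDC} as the new input that removes the Hilbert restriction. But two of your steps have genuine gaps. First, you derive frequent hypercyclicity by asserting that perfect spanning of the $\mathbb T$-eigenvectors is ``precisely'' the condition for an ergodic invariant Gaussian measure with full support. That equivalence is a Hilbert-space theorem (the route of \cite{BAYGRITAMS}); on a general Banach space the Gaussian construction needs geometric hypotheses on $X$ and regularity of the eigenvector fields, and it can fail outright --- the introduction of this very paper recalls a frequently hypercyclic weighted shift on $c_0$ admitting \emph{no} nonzero invariant Gaussian measure \cite{BAYGRIPLMS}. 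Since the whole point of the corollary is to pass from Hilbert spaces to arbitrary Banach spaces, this is exactly the step you may not take. The paper instead cites \cite{BAYMATHERGOBEST}, where frequent hypercyclicity is deduced from perfectly spanning $\mathbb T$-eigenvectors on an arbitrary Banach space without Gaussian measures.

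Second, your null-vector ingredient is both unproven and based on a false premise. You claim that ``one cannot simply use vectors whose orbit tends to $0$, since eigenvectors do not decay,'' and you propose instead an uncarried-out gliding-hump construction of a residual set of distributionally null vectors; the assertion that ``it goes through on $X$ as in \cite{BBMP13}'' is precisely what would need proof, since that construction also exploited Hilbert-space structure. In fact, although a single unimodular eigenvector has a non-decaying orbit, suitable averages of eigenvector fields do decay in norm (a Riemann--Lebesgue phenomenon), and \cite{BG1} provides exactly this: when the $\mathbb T$-eigenvectors are perfectly spanning, there is a \emph{dense} set $X_0\subset X$ with $T^nx\to 0$ for every $x\in X_0$ --- genuine norm convergence along all $n$, not merely along a set of density one. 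With that in hand, the paper simply applies Corollary \ref{CORFHCDC} (frequent hypercyclicity plus a dense set of orbits tending to zero implies distributional chaos, via \cite{BBMP13}), and no Baire intersection of two residual sets is needed. So your skeleton is right, but both remaining steps must be replaced: the Gaussian argument by the citation to \cite{BAYMATHERGOBEST}, and the cancellation construction by the dense decaying set of \cite{BG1}.
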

\begin{proof}
By \cite{BAYMATHERGOBEST}, $T$ is frequently hypercyclic. By \cite{BG1}, there exists a dense set $X_0\subset X$ such that
$T^n x\to 0$ for all $x\in X_0$. Thus we may apply Corollary \ref{CORFHCDC}.
\end{proof}

A striking difference between hypercyclic operators and frequently hypercyclic operators is the comparison of the size of $HC(T)$ and 
$FHC(T)$. Whereas $HC(T)$ is always residual when it is nonempty, it was shown (see for instance \cite{BM09}) that, for many frequently
hypercyclic operators, $FHC(T)$ is of first category. It turns out that $FHC(T)$ is always meagre.
\begin{Cor}\label{CORFIRST}
Let $T\in\mathfrak L(X)$ be frequently hypercyclic. Then $FHC(T)$ is a set of first category.
\end{Cor}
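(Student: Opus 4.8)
The plan is to combine Proposition \ref{PROPFHCDC} with the observation that a distributional unbounded orbit is incompatible with frequent hypercyclicity. First I would invoke Proposition \ref{PROPFHCDC} to produce a residual subset $\mathcal R\subset X$ such that every $y\in\mathcal R$ admits a set $B\subset\mathbb N$ with $\bar d(B)=1$ and $\|T^ny\|\to+\infty$ as $n\to+\infty$ along $B$. It then suffices to show that $FHC(T)\cap\mathcal R=\emptyset$: granting this, $FHC(T)$ is contained in the complement $X\setminus\mathcal R$, which is meagre because $\mathcal R$ is residual, and a subset of a first category set is itself of first category.

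The heart of the argument is to check that no $y\in\mathcal R$ can be a frequently hypercyclic vector. Fix such a $y$ and consider the open unit ball $U=\{z\in X;\ \|z\|<1\}$, which is nonempty and open. Since $\|T^ny\|\to+\infty$ along $B$, only finitely many $n\in B$ can satisfy $\|T^ny\|<1$; hence the return set $\{n\in\mathbb N;\ T^ny\in U\}=\{n\in\mathbb N;\ \|T^ny\|<1\}$ is contained, up to a finite set, in the complement $\mathbb N\setminus B$. Because $\underline d(\mathbb N\setminus B)=1-\bar d(B)=0$, this return set has lower density zero. By the very definition of a frequently hypercyclic vector (applied to the open set $U$), this forbids $y\in FHC(T)$. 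Thus $FHC(T)\cap\mathcal R=\emptyset$, which is exactly what was needed.

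I expect the only delicate point to be the density bookkeeping relating $\bar d(B)=1$ to $\underline d(\mathbb N\setminus B)=0$ and the passage from $B$ to a cofinite subset of it; everything else is a direct translation of the definitions of frequent hypercyclicity and of a distributional unbounded orbit. In particular, no hypothesis on $T$ or on $X$ beyond frequent hypercyclicity is required, since frequent hypercyclicity is precisely what feeds Proposition \ref{PROPFHCDC}; this explains why the conclusion holds in full generality.
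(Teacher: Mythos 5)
Your proof is correct and follows the paper's argument exactly: invoke Proposition \ref{PROPFHCDC} to get a residual set of vectors with distributionally unbounded orbit, and observe that such a vector cannot be frequently hypercyclic, so $FHC(T)$ lies in the meagre complement. The only difference is that you spell out the density bookkeeping (return set to the unit ball has lower density $\underline d(\mathbb N\setminus B)=1-\bar d(B)=0$) which the paper leaves implicit, and that computation is right.
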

\begin{proof}
A vector with a distributional unbounded orbit cannot be a frequently hypercyclic vector. By Proposition \ref{PROPFHCDC},
a frequently hypercyclic operator admits a residual subset of vectors with distributionally unbounded orbit.
\end{proof}

We now turn to an interesting problem regarding frequently hypercyclic operators.
\medskip

\noindent {\bf Question.} Let $T\in \mathfrak L(X) $ be frequently hypercyclic and invertible. Is $T^{-1}$ invertible?\medskip

In view of this paper, it is natural to study whether a bilateral weighted shift on $c_0$ could be a counterexample.
It we look at Theorem \ref{THMCARACBILATERAL}, this does not seem impossible; indeed, because of (c),
the conditions on the right part and on the left part of $B_{\mathbf w}$ are not symmetric. However, it could be possible
that this condition is superfluous.\medskip

\noindent{\bf Question.} Let $\mathbf w=(w_n)_{n\in\mathbb Z}$ be a bounded and bounded below sequence
of positive integers such that conditions (a), (b) and (d) of Theorem \ref{THMCARACBILATERAL}
are satisfied. Does it automatically satisfy conditions (a), (b), (c) and (d), maybe for another family
$(E_p)$ of subsets of $\mathbb N$? 

\medskip

At least, we are able to give a partial positive answer to our first question.
\begin{Prop}
Let $T\in\mathfrak L(X)$ be frequently hypercyclic and invertible. Then $T^{-1}$ is
$\mathcal U$-frequently hypercyclic.
\end{Prop}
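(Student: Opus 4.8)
The plan is to show that if $T$ is frequently hypercyclic and invertible, then $T^{-1}$ is $\mathcal U$-frequently hypercyclic, by producing for $T^{-1}$ a single vector whose orbit visits every nonempty open set along a set of positive \emph{upper} density. The natural candidate is a frequently hypercyclic vector $x$ for $T$ itself: the idea is that the two-sided orbit $(T^n x)_{n \in \mathbb Z}$ is the same object whether we read it forward (via $T$) or backward (via $T^{-1}$), and positive lower density of return times for $T$ should translate, after a change of viewpoint, into positive upper density of return times for $T^{-1}$.

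Concretely, first I would fix $x \in FHC(T)$ and a nonempty open set $U \subset X$. Since $T$ is frequently hypercyclic with witness $x$, the set $N = \{n \in \mathbb Z_+ ;\ T^n x \in U\}$ has $\underline d(N) > 0$. The key step is to transfer this information to negative powers. For each $n \in N$, apply $T^{-n}$ to the vector $T^n x \in U$; since $U$ is open, by continuity of $T^{-n}$ there is a small open ball around $T^n x$ contained in the preimage structure, and one wants to locate an open set hit by the $T^{-1}$-orbit along a positively-dense-in-upper-density set of times. The cleanest way is to observe that a single $T^{-1}$-orbit will not suffice directly from a forward frequently hypercyclic vector; instead I would use the orbit of $x$ under $T^{-1}$ and exploit that, because $T$ is frequently hypercyclic and hence hypercyclic, $T^{-1}$ is already hypercyclic, so $\mathcal UFHC(T^{-1})$-type density statements can be built from the recurrence structure of $x$.

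The decisive combinatorial mechanism I expect to use is the relationship between lower and upper density under reflection and translation of index sets. If $x$ is frequently hypercyclic for $T$, then for each basic open $U$ the return set has positive lower density; the $T^{-1}$ dynamics reads these same times with reversed sign, and the passage from ``$\underline d > 0$ for forward times'' to ``$\bar d > 0$ for the reflected backward times'' is where the asymmetry between the two notions is exactly accounted for. The main obstacle, and the step I would spend the most care on, is verifying that one genuinely obtains a \emph{single} $\mathcal U$-frequently hypercyclic vector for $T^{-1}$ rather than merely a vector-dependent family of good return times: one must check that the chosen vector (most plausibly $x$ again, or a vector built from $x$ via the invertibility of $T$) has, simultaneously for every nonempty open $U$, a $T^{-1}$-return set of positive upper density. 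I would argue this by reducing to a countable dense family of open balls and controlling all of them at once through the density transfer above, using that the forward orbit of $x$ is frequently dense to guarantee, for each target, infinitely many well-separated hitting times whose reflected images retain positive upper density.
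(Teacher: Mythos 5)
You have the right local mechanism in view --- reading a long forward orbit segment backwards, so that the reflection $j\mapsto n-j$ of the window $\{0,\dots,n\}$ turns forward hitting times of proportion at least $\delta$ into backward hitting times of the same proportion --- but you attach it to the wrong vector, and this is a genuine gap. The reflection identity $T^{-j}(T^n x)=T^{n-j}x$ transfers the lower-density returns of $x$ in $[0,n]$ to backward returns of the \emph{endpoint} $y=T^n x$, not of $x$ itself. For $x$ itself there is no density transfer at all: frequent hypercyclicity of $T$ says nothing quantitative about the backward orbit $(T^{-n}x)_{n\geq 0}$, and whether vectors of $FHC(T)$ remain good for $T^{-1}$ is essentially the open question the paper states just before this proposition. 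So your proposed witness (``most plausibly $x$ again'') cannot be justified, and your fallback observation that $T^{-1}$ is hypercyclic yields no density information whatsoever.

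The second, decisive gap is the merging step, which you correctly flag as ``the main obstacle'' but never resolve: for each target set and each scale $n$, the reflection trick produces a \emph{different} vector $T^n x$, good for one target at one scale only, and no bookkeeping over a countable dense family of balls will by itself merge these into a single vector with positive upper density return sets for every open set. The paper supplies exactly the missing device: Baire category. Setting $\delta_k=\underline d\big(\{n;\ T^n x\in U_k\}\big)$ for a basis $(U_k)$ of open sets, one introduces the open sets $U_{k,N,n}=\big\{y;\ \#\{1\leq j\leq n;\ T^{-j}y\in U_k\}\geq \delta_k n/2\big\}$ and proves that each $\bigcup_{n\geq N}U_{k,N,n}$ is dense: given open nonempty $V$, pick $n\geq N_k$ with $T^n x\in V$, where $N_k\geq N$ is large enough that the forward returns of $x$ up to time $n$ already have proportion $\geq \delta_k/2$; the reflection identity then places $y=T^n x$ in $U_{k,N,n}\cap V$. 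Baire's theorem gives a residual set inside $\bigcap_{k,N\geq 1}\bigcup_{n\geq N}U_{k,N,n}$, and every vector there is $\mathcal U$-frequently hypercyclic for $T^{-1}$. Note that upper density (one good scale per $N$, rather than all large scales) is precisely what survives this construction, and the witness is produced abstractly, not as $x$ or any explicit iterate. Without this step, or an equivalent limiting construction, your outline does not close.
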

\begin{proof}
Let $x\in FHC(T)$, let $(U_k)$ be a basis of open subsets of $X$ and let 
$$\delta_k=\underline d\big(\big\{n\in\mathbb N:\ T^n x\in U_k\big\}\big).$$
We set, for $k,N\geq 1$ and $n\geq N$, 
$$U_{k,N,n}=\big\{y\in X;\ \#\{1\leq j\leq n;\ T^{-j}y\in U_k\}\geq \delta_kn/2\big\}.$$
$U_{k,N,n}$ is clearly open. Moreover, $\bigcap_{k,N\geq 1}\bigcup_{n\geq N} U_{k,N,n}$ contains $\mathcal UFHC(T^{-1})$.
We intend to apply Baire's theorem and we prove that $\bigcup_{n\geq N} U_{k,N,n}$ is dense for any $k,N\geq 1$. 
Let $V\subset X$ be open and nonempty and let $N_k\geq N$ be such that, for any 
$n\geq N_k$, 
$$\#\{0\leq j\leq n;\ T^j x\in U_k\}\geq\frac{\delta_k n}2.$$
There exists $n\geq N_k$ such that $T^n x\in V$. Let us set $y=T^n x$. Then 
$$\#\{0\leq j\leq n;\ T^{-j} y\in U_k\}=\#\{0\leq j\leq n;\ T^j x\in U_k\}.$$
In particular, $y\in \bigcup_{n\geq N}U_{k,N,n}\cap V$. By Baire's theorem, $\mathcal UFHC(T^{-1})$ is residual, hence
nonempty.
\end{proof}

An easy modification of the previous argument yields the following interesting corollary, to be compared with Corollary \ref{CORFIRST}.
\begin{Prop}
Let $T\in\mathfrak L(X)$ be $\mathcal U$-frequently hypercyclic. Then $\mathcal UFHC(T)$ is residual.
\end{Prop}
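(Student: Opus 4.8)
The plan is to imitate the Baire category argument used for the preceding proposition, but working with the forward iterates $T^j$ instead of $T^{-j}$ and with upper densities throughout. Fix a vector $x\in\mathcal UFHC(T)$ and a basis $(U_k)$ of nonempty open subsets of $X$. For each $k$ set $\delta_k=\bar d\big(\{n\in\mathbb N;\ T^nx\in U_k\}\big)$, which is positive precisely because $x$ is $\mathcal U$-frequently hypercyclic. For $k,N\ge 1$ and $n\ge N$, define
$$U_{k,N,n}=\big\{y\in X;\ \#\{1\le j\le n;\ T^jy\in U_k\}\ge \delta_k n/2\big\}.$$
Each $U_{k,N,n}$ is open, since $y\mapsto \#\{1\le j\le n;\ T^jy\in U_k\}$ is lower semicontinuous (every condition $T^jy\in U_k$ defines an open set).

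First I would check that the $G_\delta$ set $\mathcal G=\bigcap_{k,N\ge 1}\bigcup_{n\ge N}U_{k,N,n}$ is contained in $\mathcal UFHC(T)$. Indeed, if $y\in\mathcal G$, then for every $k$ and every $N$ there is some $n\ge N$ with $\#\{1\le j\le n;\ T^jy\in U_k\}\ge\delta_k n/2$; hence $\bar d\big(\{j;\ T^jy\in U_k\}\big)\ge\delta_k/2>0$ for every $k$, and since $(U_k)$ is a basis this says exactly that $y\in\mathcal UFHC(T)$. As a superset of a residual set is again residual, it then suffices to prove that $\mathcal G$ is residual, for which I intend to invoke Baire's theorem after showing that each $\bigcup_{n\ge N}U_{k,N,n}$ is dense.

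To prove density, fix $k,N\ge 1$ and a nonempty open set $V\subset X$. Since $x$ is in particular hypercyclic, its orbit is dense, so there is an index $m_0$ with $T^{m_0}x\in V$; set $y=T^{m_0}x$. The key observation is that the time set $\{j\ge 0;\ T^jy\in U_k\}$ is obtained from $\{i\ge 0;\ T^ix\in U_k\}$ by a shift by the fixed amount $m_0$ (followed by discarding the finitely many resulting negative indices), and upper density is invariant under such an operation; hence
$$\bar d\big(\{j;\ T^jy\in U_k\}\big)=\bar d\big(\{i;\ T^ix\in U_k\}\big)=\delta_k.$$
In particular $\limsup_n \tfrac1n\#\{1\le j\le n;\ T^jy\in U_k\}=\delta_k>\delta_k/2$, so there is some $n\ge N$ with $\#\{1\le j\le n;\ T^jy\in U_k\}\ge\delta_k n/2$, i.e.\ $y\in U_{k,N,n}$. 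Thus $y\in V\cap\bigcup_{n\ge N}U_{k,N,n}$, which proves density. By Baire's theorem $\mathcal G$ is a dense $G_\delta$, hence residual, and therefore so is $\mathcal UFHC(T)$.

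I expect the only genuinely new point, compared with the previous proof, to be the density step. There, choosing the test vector to be $T^nx$ made the backward window of $y$ coincide exactly with the forward window of $x$, and the lower-density hypothesis supplied the required count for \emph{all} large $n$. Here only an upper-density hypothesis is available, so no such uniform-in-$n$ bound exists; the remedy is to fix a single $m_0$ with $T^{m_0}x\in V$ and to exploit the shift-invariance of upper density, which guarantees that the needed count is attained along a subsequence of $n$, and in particular for some $n\ge N$.
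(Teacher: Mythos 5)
Your proof is correct and takes essentially the same route as the paper's: the same open sets $U_{k,N,n}$, the same test vectors $y=T^{m_0}x$ drawn from the dense orbit of $x$, and the same conclusion via Baire's theorem. The only difference is cosmetic: you make explicit the shift-invariance of upper density underlying the claim that every iterate $T^{p}x$ lies in $\bigcap_{k,N\geq 1}\bigcup_{n\geq N}U_{k,N,n}$, a point the paper dismisses with ``it is easy to see''.
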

\begin{proof}
Let $x\in \mathcal UFHC(T)$, let $(U_k)$ be a basis of open subsets of $X$ and let 
$$\delta_k=\bar d\big(\big\{n\in\mathbb N:\ T^n x\in U_k\big\}\big).$$
We set, for $k,N\geq 1$ and $n\geq N$, 
$$U_{k,N,n}=\big\{y\in X;\ \#\{1\leq j\leq n;\ T^{j}y\in U_k\}\geq \delta_kn/2\big\},$$
which is open. Moreover, it is easy to see that any iterate $T^p x$ belongs to $\bigcap_{k,N\geq 1}\bigcup_{n\geq N}U_{k,N,n}$. Since
these iterates are dense in $X$, $\bigcap_{k,N\geq 1}\bigcup_{n\geq N}U_{k,N,n}$ is a dense $G_\delta$-set, which is contained in $\mathcal UFHC(T)$.
\end{proof}

\bigskip

\noindent \textsc{Acknowledgements.} We thank Pr. Vitaly Bergelson for fruitful discussions.

 \end{document}